\documentclass[12pt, reqno, a4paper]{amsart}
\usepackage{graphicx, epsfig, psfrag}
\usepackage{amsfonts,amsmath,amssymb,amsbsy,amsthm}
\usepackage{bm}
\usepackage{xcolor}
\usepackage{float}
\usepackage{hyperref}
\usepackage{mathrsfs}
\usepackage[top=3cm, bottom=3cm, left=3cm, right=3cm]{geometry}

\usepackage{pdfsync}


\usepackage{environments}


\newcommand{\smfrac}[2]{{\textstyle \frac{#1}{#2}}}

\newcommand{\mymat}[1]{\left[ \begin{matrix} #1 \end{matrix} \right]}


\def\XXint#1#2#3{{\setbox0=\hbox{$#1{#2#3}{\int}$ }
\vcenter{\hbox{$#2#3$ }}\kern-.6\wd0}}

\def\b{\big}
\def\B{\Big}
\def\bg{\bigg}

\def\sep{\,|\,}
\def\bsep{\,\b|\,}

\def\R{\mathbb{R}}
\def\N{\mathbb{N}}
\def\Z{\mathbb{Z}}

\def\dx{\,{\rm d}x}

\def\ds{\,{\rm d}s}

\def\pp{\partial}

\def\<{\langle}
\def\>{\rangle}

\def\del{\delta}
\def\ddel{\delta^2}

\def\ol{\overline}

\def\mA{{\sf A}}

\def\mF{{\sf F}}

\def\mQ{{\sf Q}}

\def\D{\partial}


\def\a{{\rm a}}
\def\Ea{E^\a}
\def\cb{{\rm cb}}
\def\Ecb{E^\cb}
\def\scb{{\rm scb}}
\def\Escb{E^\scb}
\def\x{\ell}

\def\OO{\mathcal{O}}
\def\Err{{\rm Err}}
\def\L{\Lambda}
\def\Ng{\mathcal{N}}
\def\Om{\Omega}
\def\Th{\mathcal{T}_h}
\def\Tm{\mathcal{T}_\a}
\def\PI{{\rm P}_1}
\def\PO{{\rm P}_0}
\def\per{\#}

\begin{document}

\title[An Analysis of Surface Relaxation in the SCB Model]{An Analysis
  of Surface Relaxation in \\ the Surface Cauchy--Born Model}

\author{K. Jayawardana}
\address{K. Jayawardana \\ Department of Mathematics \\
  University College London \\
  Gower Street \\
  London WC1E 6BT \\ UK} 
\email{k.guruge@ucl.ac.uk}

\author{C. Mordacq}
\address{C. Mordacq}
\email{Christelle.Mordacq@gmail.com}

\author{C. Ortner}
\address{C. Ortner\\ Mathematics Institute \\ Zeeman Building \\
  University of Warwick \\ Coventry CV4 7AL \\ UK}
\email{christoph.ortner@warwick.ac.uk}

\author{H. S. Park}
\address{H. S. Park\\  Boston University \\
Department of Mechanical Engineering \\
730 Commonwealth Avenue, ENA 212 \\
Boston \\ MA 02215 \\ USA}
\email{parkhs@acs.bu.edu}

\date{\today}

\thanks{KJ and CM were supported by undergraduate vacation bursaries
  at the Oxford Centre for Nonlinear PDE. CO was supported by the
  EPSRC Grant EP/H003096 ``Analysis of Atomistic-to-Continuum Coupling
  Methods''. HP was supported by NSF grants CMMI-0750395 and
  CMMI-1036460.}

\subjclass[2000]{70C20, 70-08, 65N12, 65N30}

\keywords{surface-dominated materials, surface Cauchy--Born rule, coarse-graining}

\begin{abstract}
  The Surface Cauchy--Born (SCB) method is a computational multi-scale
  method for the simulation of surface-dominated crystalline
  materials. We present an error analysis of the SCB method, focused on
  the role of surface relaxation. 

  In a linearized 1D model we show that the error committed by the SCB
  method is $\OO(1)$ in the mesh size; however, we are able to
  identify an alternative ``approximation parameter'' --- the
  stiffness of the interaction potential --- with respect to which the
  error in the mean strain is exponentially small. Our analysis
  naturally suggests an improvement of the SCB model by enforcing
  atomistic mesh spacing in the normal direction at the free boundary.
\end{abstract}

\maketitle

\section{Introduction}

Miniaturization of materials to the nanometer scale has led to
unexpected and often enhanced mechanical properties that are not
found in corresponding bulk materials
\cite{cuenotPRB2004,seoNL2011}.  The size-dependence of the mechanical
behavior and properties has been experimentally observed to begin
around a scale of about 100 nanometers~\cite{parkMRS2009}.  A fully
atomistic simulation of a nanostructure of this size would require on
the order of $10^8$ atoms, which motivates the need for
computationally efficient multiscale methods.

The underlying cause for the size-dependent mechanical properties is
that surface atoms have fewer bonding neighbours, or a coordination
number reduction, as compared to atoms that lie within the material
bulk. This results in the elastic properties of surfaces being
different from those of an idealized bulk material~\cite{parkMRS2009},
which becomes important with decreasing structural size and increasing
surface area to volume ratio~\cite{cuenotPRB2004}. Additionally,
nanoscale surface stresses~\cite{cammarataPSS1994}, which also arise
from the coordination number reduction of surface
atoms~\cite{sunJPCM2002}, cause deformation of not only the surfaces,
but also the underlying bulk~\cite{liangPRB2005}, and can result in
unique physical properties such as phase
transformations~\cite{diaoNM2003}, or shape memory and
pseudoelasticity effects in FCC nanowires that are not observed in the
corresponding bulk material~\cite{parkPRL2005,liangNL2005}.

To study surface-dominated nanostructures, Park \emph{et al.} recently
developed the surface Cauchy-Born (SCB) model \cite{parkIJNME2006,
  parkPRB2007, parkCMAME2008}.  The idea is to seek an energy
functional of the form
\begin{displaymath}
  \Escb(y) = \int_\Om W(\D y) \dx + \int_{\pp\Om} \gamma(\D y, \nu)
  \ds,
\end{displaymath}
where $\Om \subset \R^3$ is an elastic body, $y : \Om \to \R^3$ a
deformation field, $W$ the bulk stored energy function, and
$\gamma$ a surface stored energy function. The potentials $W, \gamma$
are chosen such that $W(\mF)$ denotes the energy per unit volume in an
infinite crystal under the deformation $y(x) = \mF x$, while
$\gamma(\mF, \nu)$ is the surface energy per unit area of a half-space
with surface normal $\nu$, under the deformation $y(x) = \mF x$. Thus,
$W$ and $\gamma$ are {\em derived} from the underlying atomistic
model. For $W$ this is a well-understood idea \cite{BLBL:arma2002,
  E:2007a}; the novel approach in the SCB method is to apply the same
principle to the surface energy potential.

In contrast to the SCB method, most computational models (see, e.g.,
\cite{yvonnetCM2008, gaoNANO2006, heCM2009}) are based upon a finite
element discretization of the governing surface elasticity equations
of Gurtin and Murdoch~\cite{gurtinARMA1975}, where the constitutive
relation for the surface is linearly elastic or uses standard
hyperelastic strain energy functions \cite{javiliCMAME2009}.

The SCB model was successfully applied to various nanomechanical
boundary value problems, including thermomechanical
coupling~\cite{yunCMAME2008a}, resonant frequencies, and elucidating
the importance of nonlinear, finite deformation kinematics on the
resonant frequencies of both FCC metal~\cite{parkJMPS2008} and silicon
nanowires~\cite{parkJAP2008a,parkNANO2009}, bending of FCC
metal~\cite{yunPRB2009} nanowires, and electromechanical coupling in
surface-dominated nanostructures~\cite{parkCMAME2011}.

\begin{figure}[t]
  \includegraphics[height=5cm]{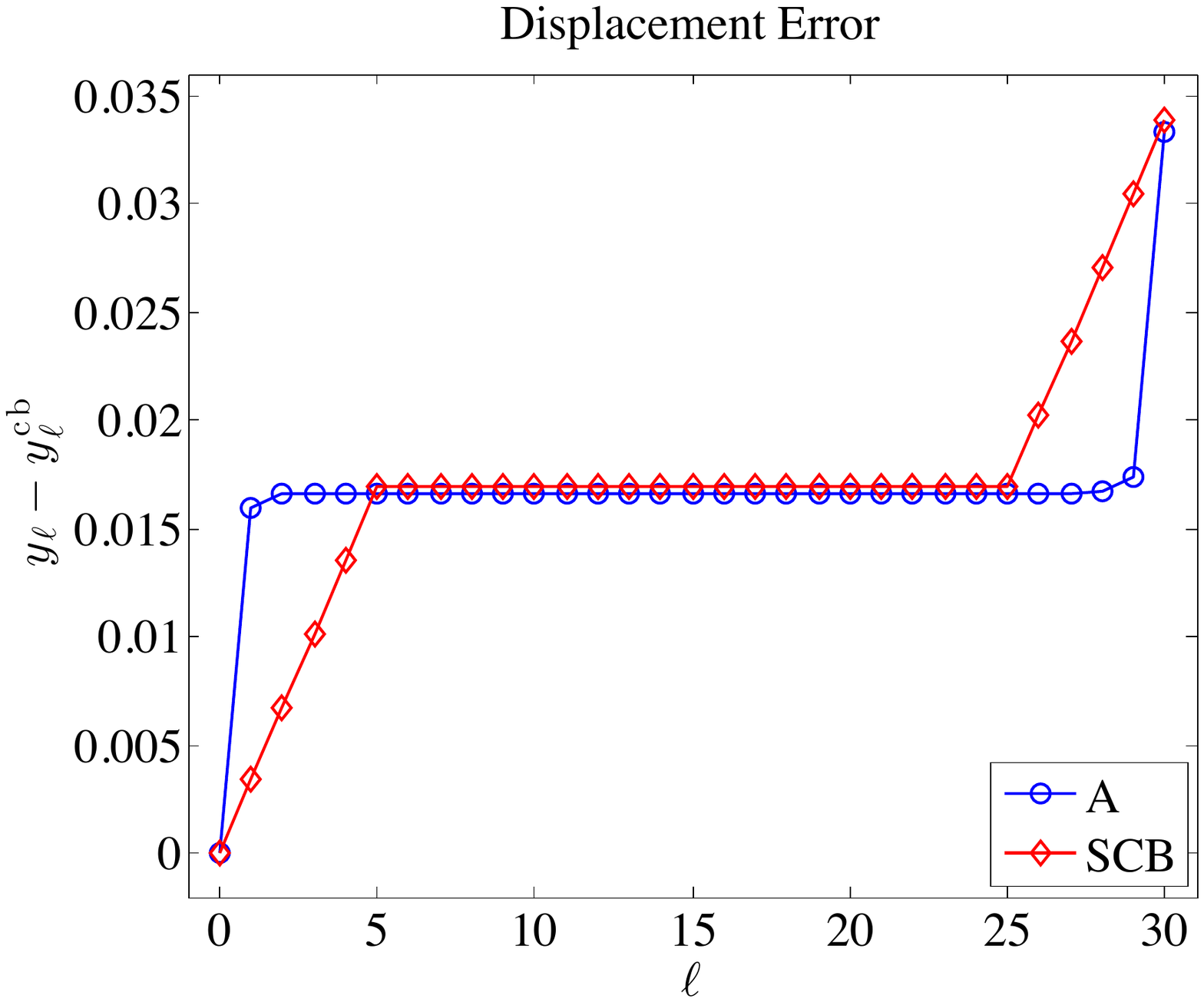}
  \qquad 
  \includegraphics[height=5cm]{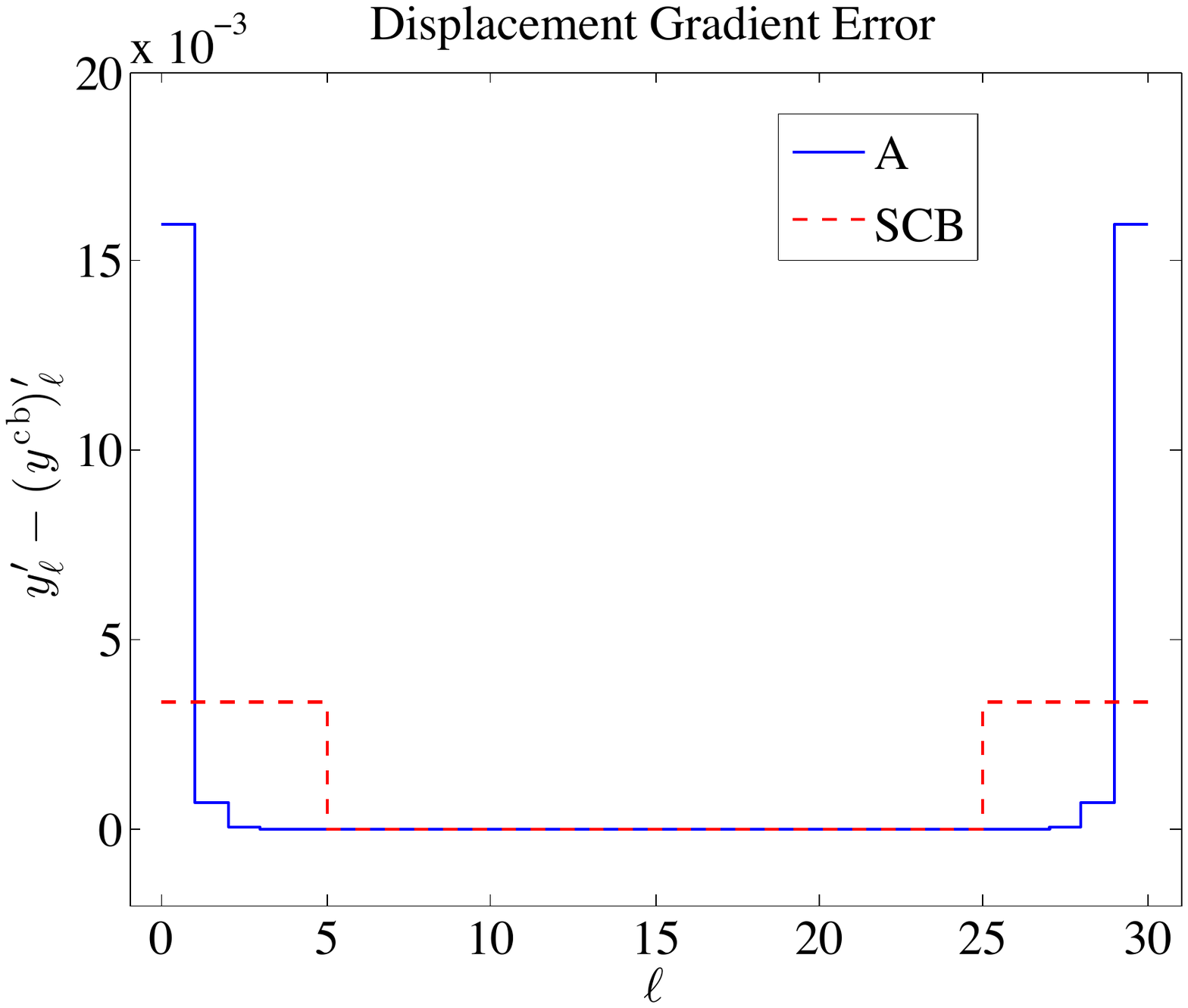}
  \caption{\label{fig:err_intro} Displacements and displacement
    gradients of an atomistic solution and a surface Cauchy--Born
    solution, relative to the bulk Cauchy--Born solution, for a 1D
    model problem. We observe unexpectedly high accuracy at the finite
    element nodes despite a large error in the displacement gradient.}
\end{figure}

{ The purpose of the present work is to initiate a mathematical
  analysis of the accuracy of the SCB method. We focus on the simplest
  setting where the only effect is a surface relaxation in normal
  direction.  While the SCB model does include surface physics that
  are neglected in the standard Cauchy--Born (CB) model, due to
  employing a coarse finite element discretisation it does not resolve
  the resulting boundary layer; see the numerical results in
  \cite{farsadIJNME2010} as well as see Figure \ref{fig:err_intro} for
  a 1D toy model demonstrating this. It is therefore {\it a priori}
  unclear to what extent the SCB improves upon the CB model. Figure
  \ref{fig:err_intro} suggests that, while the error in the
  displacement and displacement gradient is indeed of order $\OO(1)$
  in the boundary layer, the displacement error in finite element
  nodes is visually negligable, which would imply that the SCB model
  approximates the {\em mean strain} (and possibly other averaged
  quantities) to a much higher degree of accuracy. This was indeed
  observed in extensive numerical tests presented in
  \cite{parkPRB2007, parkCMAME2008, farsadIJNME2010}.  }

There is no traditional discretisation or approximation parameter
available with respect to which we might try to explain this
effect. Instead, our analysis measures the SCB error in terms of the
stiffness of the interaction potential. This enables us to identify a
suitable asymptotic limit for our analysis on a linearized model
problem. We confirm the analytical predictions with numerical
experiments on the fully nonlinear problem in 1D and and a periodic
semi-infinite 2D domain.

To the best of our knowledge, our work presents the first
approximation error results for the SCB method. Although our analysis
is elementary, it makes two important novel contributions: 1. We show
that the ``correct'' approximation parameter is the stiffness of the
interaction potential (however, Theil~\cite{Theil:2011} uses similar
ideas for an analysis of surface relaxation); and 2. We show that the
mean strain (which is an important quantity of interest) has a much
lower error than the strain field. {3. Our results show how to
  substantially improve the accuracy of the SCB method at little
  additional computational cost. Finally, we hope that this work will
  stimulate further research on computationally efficient multiscale
  methods for surface-dominated nanostructures.}

The issues we address here are closely related to the classical problem of numerical
methods for resolving boundary layers \cite{RoTyTo:2008}. The main
difference in our case is the discrete setting which does not give us
the opportunity to let the mesh-size tend to zero. For a mathematical
analysis of thin atomistic structures, surface energies and surface
relaxation we refer to \cite{Schmidt:2008, BrCi:2007, Theil:2011,
  ScSchZa:2011} and references therein. Our work also draws
inspiration from \cite{Dobson:2008c, qce.stab} where a similar
linearised model problem is used to analyze the accuracy of
atomistic-to-continuum coupling methods.

\section{Analysis of a 1D Model Problem}
\subsection{Atomistic model}
We consider a semi-infinite chain of atoms with reference positions
$\x \in \N$, and deformed positions $y_\x$, $\x \in \N$.  We assume
that the chain interacts through second-neighbour Morse pair
interaction. Hence, a deformed configuration $y$ has energy
\begin{equation}
  \label{eq:defn_Ea}
  \Ea(y) := \sum_{\x = 0}^\infty \B[ \phi(y_{\x+1} - y_\x) + 
  \phi(y_{\x+2} - y_\x) \B],
\end{equation}
where $\phi$ is a shifted Morse potential with stiffness parameter
$\alpha > 0$ and potential minimum $r_0 > 0$,
\begin{displaymath}
  \phi(r) = \exp(-2 \alpha (r - r_0)) - 2 \exp( -\alpha (r - r_0)) - \phi_0,
\end{displaymath}
where $\phi_0$ is chosen to that $W(1) = 0$, where $W(r) := \phi(r) +
\phi(2r) = 0$, $r_0$ is defined such that $W'(1) = 0$,
\begin{equation}
  \label{eq:defn_r0}
  r_0 = 1 + \frac{1}{\alpha} \log\B( \frac{1+2e^{-\alpha}}{1 + 2e^{-2\alpha}}\B),
\end{equation}
and $\alpha \geq 1 + \sqrt{3}$ remains a free parameter. This
restriction on $\alpha$ ensures that $\phi''(2) \leq 0$, which will be
convenient in the analysis. The shift of the potential by $\phi_0$
ensures that $\Ea$ is well-defined.

The potential $W$ is called the {\em Cauchy--Born stored energy
  density}. We have chosen the parameters in the Morse potential so
that $1$ is the minimizer of $W$, that is, we are working in
non-dimensional atomic units.

Since $\Ea$ is translation invariant, it is convenient to fix $y_0 =
0$. In that case, $y_\x$ is completely determined by the {\em forward
  differences} $y_\x' := y_{\x+1} - y_\x$. Hence we change coordinates
from the deformation $y_\x$ to the displacement gradient $u_\x :=
y_\x' - 1$, and rewrite $\Ea$ as
\begin{displaymath}
  \Ea(u) := \sum_{\x = 0}^\infty \B[ \phi(1+u_\x) +
  \phi(2+u_\x+u_{\x+1}) \B].
\end{displaymath}

The proof of the next result, which establishes that $\Ea$ is
well-defined, is given in the appendix.

\begin{proposition}
  \label{th:properties_Ea}
  $\Ea$ is well-defined and twice Fr\'echet differentiable in
  $\ell^1(\N)$ with first and second variations given by
  \begin{align*}
    \<\del\Ea(u), v \> =~& \sum_{\x = 0}^\infty \B[ \phi'(1+u_\x) v_\x
    + \phi'(2+u_\x+u_{\x+1})(v_\x+v_{\x+1}) \B], \\
    \< \ddel\Ea(u) v, w \> =~& \sum_{\x = 0}^\infty \B[ \phi''(1+u_\x)
    v_\x w_\x
    + \phi''(2+u_\x+u_{\x+1})(v_\x+v_{\x+1})(w_\x+w_{\x+1}) \B]. \\
  \end{align*}
\end{proposition}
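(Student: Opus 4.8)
The plan is to reduce the whole statement to term-by-term Taylor expansion of the single smooth function $\phi$, using the one structural fact that the summand of $\Ea$ vanishes at the reference configuration: $\phi(1)+\phi(2)=W(1)=0$. First I would fix a radius $R>0$ and restrict to the ball $B_R:=\{u\in\ell^1(\N):\|u\|_{\ell^1}\le R\}$. The reason is that $\ell^1(\N)$ embeds continuously into $\ell^\infty(\N)$, so for $u\in B_R$ all arguments $1+u_\x$ and $2+u_\x+u_{\x+1}$ that occur in $\Ea$, $\del\Ea$, $\ddel\Ea$ stay in one fixed compact interval $I_R\subset\R$, on which $\phi,\phi',\phi'',\phi'''$ are bounded by a constant $C_R$. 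Given this, the three claims --- $\Ea$ finite, $\del\Ea$ as written, $\ddel\Ea$ as written --- all follow from the same template.

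For \emph{finiteness} I would write the $\x$-th term of $\Ea(u)$ as $[\phi(1+u_\x)-\phi(1)]+[\phi(2+u_\x+u_{\x+1})-\phi(2)]$, bound it by $C_R(2|u_\x|+|u_{\x+1}|)$ with the mean value theorem, and sum to get $|\Ea(u)|\le 3C_R\|u\|_{\ell^1}$. For the \emph{first variation}: the candidate functional $v\mapsto\<\del\Ea(u),v\>$ is bounded on $\ell^1(\N)$ because each of its coefficients is a value of $\phi'$ on $I_R$ and $\sum_\x(|v_\x|+|v_{\x+1}|)\le 2\|v\|_{\ell^1}$; and the remainder $\Ea(u+v)-\Ea(u)-\<\del\Ea(u),v\>$ is a sum of terms of the form $\phi(a+s)-\phi(a)-\phi'(a)s$ with $a\in I_R$ and $s\in\{v_\x,\,v_\x+v_{\x+1}\}$, so Taylor's theorem with Lagrange remainder bounds each by $C_R s^2$. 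Summing, the remainder is $\OO(\sum_\x|v_\x|^2)=\OO(\|v\|_{\ell^1}^2)=o(\|v\|_{\ell^1})$, which identifies $\del\Ea(u)$.

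The \emph{second variation} is the same argument one derivative higher. I would regard $v\mapsto\ddel\Ea(u)v$ as a map into $(\ell^1(\N))^\ast$, observe that it is bounded on $B_R$ (its entries are values of $\phi''$ on $I_R$), and then control $\del\Ea(u+v)-\del\Ea(u)-\ddel\Ea(u)v$ in the operator norm: tested against $w$ with $\|w\|_{\ell^1}\le1$, it is a sum of terms $[\phi'(a+s)-\phi'(a)-\phi''(a)s]\,t$ with $a\in I_R$, $s\in\{v_\x,\,v_\x+v_{\x+1}\}$, $|t|\le|w_\x|+|w_{\x+1}|$, and Taylor's theorem bounds $|\phi'(a+s)-\phi'(a)-\phi''(a)s|\le C_R s^2$; since $|s|\le2\|v\|_{\ell^1}$ and $\sum_\x(|w_\x|+|w_{\x+1}|)\le2$, the total is $\le C_R'\|v\|_{\ell^1}^2$ uniformly in $w$. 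This identifies $\ddel\Ea(u)$ as the second Fr\'echet derivative; a practically identical estimate would also show $u\mapsto\ddel\Ea(u)$ continuous, should $C^2$ regularity be wanted.

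The one place requiring care --- and, I expect, the only real obstacle --- is that the Morse potential is not globally controlled: $\phi(r)\sim e^{-2\alpha(r-r_0)}$ as $r\to-\infty$, so none of the constants $C_R$ survive the limit $R\to\infty$ and one cannot appeal to an off-the-shelf Banach-space differentiability lemma. The remedy is precisely the localisation above --- working on bounded subsets of $\ell^1(\N)$ and using $\ell^1(\N)\hookrightarrow\ell^\infty(\N)$ to trap the arguments of $\phi$ in a compact set --- after which the proof is routine bookkeeping resting on Taylor expansion and the elementary inequality $\|v\|_{\ell^2}\le\|v\|_{\ell^1}$, which is doubtless why the authors defer it to the appendix.
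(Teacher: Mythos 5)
Your proposal is correct and follows essentially the same route as the paper's (very terse) appendix proof: term-by-term Taylor expansion of $\phi$ about the arguments appearing in $\Ea$, using $\phi(1)+\phi(2)=0$ for absolute convergence and the bound $\|v\|_{\ell^2}\le\|v\|_{\ell^1}$ to control the remainders, with the same argument repeated one derivative higher for $\ddel\Ea$. Your explicit localisation via $\ell^1(\N)\hookrightarrow\ell^\infty(\N)$ simply makes precise a step the paper leaves implicit, and dispensing with the first-order cancellation (which the paper invokes to get a quadratic bound) is harmless since a linear mean-value bound already gives well-definedness.
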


\subsection{The Cauchy--Born and surface Cauchy--Born models}
The Cauchy--Born approximation is designed to model elastic bulk
behaviour in crystals. The stored energy density is chosen so that the
Cauchy--Born energy is exact under homogeneous deformations in the
absence of defects (such as surfaces). For the 1D model
\eqref{eq:defn_Ea} this yields
\begin{equation}
  \label{eq:defn_cb}
  \Ecb(y) := \int_0^\infty W(y') \dx, \qquad \text{for } y \in
  W^{1,1}_0(0, \infty),
\end{equation}
or equivalently, written in terms of the displacement gradient $u = y'
- 1$,
\begin{displaymath}
  \Ecb(y) = \int_0^\infty W(1+u) \dx, \qquad \text{for } u \in L^1(0, \infty),
\end{displaymath}
where $W(r) = \phi(r) + \phi(2r)$ was already defined above.

We consider a $\PI$ finite element discretisation of the Cauchy--Born
model. Let $X_h := \{X_0, X_1, \dots \} \subset \N$ be a strictly
increasing sequence of grid points with $X_0 = 0$, and let $h_j :=
X_{j+1} - X_j$. A $\PI$ discretisation of $y$ corresponds to a $\PO$
discretisation of the displacement gradient $u$, hence we define for
$(U_j)_{j = 0}^\infty \subset \R$, where $U_j$ denotes the
displacement gradient in the element $(X_j, X_{j+1})$,
\begin{displaymath}
  \Ecb_h(U) := \sum_{j = 0}^\infty h_j W(1+U_j).
\end{displaymath}

The Cauchy--Born approximation commits an error at the crystal
surface, which the surface Cauchy--Born (SCB) approximation aims to
rectify. The idea of the SCB method (in our 1D setting) is to define
\begin{equation}
  \label{eq:defn_scb}
  \Escb(y) := \int_0^\infty W(y') \dx + \gamma(y'(0)),
\end{equation}
and choose $\gamma$ such that the energy is exact under homogeneous
deformations, which yields the formula
\begin{equation}
  \label{eq:1d_gamma}
  \gamma(F) := - \smfrac12 \phi(2 F);
\end{equation}
see also Figure \ref{fig:1d_bdry}.  Converting to the displacement
gradient coordinate discretised by the $\PO$ finite element method we
obtain
\begin{displaymath}
  \Escb_h(U) := \Ecb_h(U) + \gamma(1+U_0).
\end{displaymath}

\begin{figure}[t]
  \includegraphics[width=6cm]{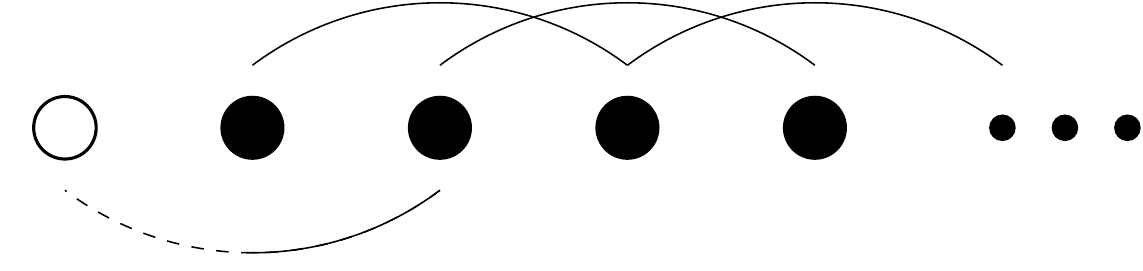}
  \caption{\label{fig:1d_bdry} Visualisation of \eqref{eq:1d_gamma}:
    the bond at the bottom of the graph is counted half in the
    Cauchy--Born model, even though it does not exist in the atomistic
    model, hence it gives a contribution $-\frac12 \phi(2y'(0))$ to
    the surface energy.}
\end{figure}

\begin{proposition}
  \label{th:properties_Ecb}
  $\Ecb_h$ and hence $\Escb_h$ are well-defined and twice Fr\'echet
  differentiable in the weighted space $\ell^1_h(X_h) := \{ V =
  (V_j)_{j = 0}^\infty \}$ equipped with the norm
  \begin{displaymath}
    \| V \|_{\ell^1_h} := \sum_{j = 0}^\infty h_j |V_j|.
  \end{displaymath}
  The first and second variations of $\Escb_h$ are given by
  \begin{align*}
    \< \del\Escb_h(U), V \> =~& \sum_{j = 0}^\infty h_j W'(1+U_j) V_j +
    \gamma'(1+U_0) V_0, \\
    \< \ddel\Escb_h(U) V, W \> =~& \sum_{j = 0}^\infty h_j W''(1+U_j)
    V_j W_j + \gamma''(1+U_0) V_0 W_0.
  \end{align*}
\end{proposition}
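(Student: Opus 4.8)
The plan is to write $\Escb_h(U) = \Ecb_h(U) + \gamma(1+U_0)$, dispose of the surface term by a triviality, and handle the bulk term $\Ecb_h(U) = \sum_{j=0}^\infty h_j W(1+U_j)$ by Taylor expansion of $W$ about $r = 1$. Two preliminary remarks are used throughout. First, $W(r) = \phi(r) + \phi(2r)$ and $\gamma(F) = -\smfrac12 \phi(2F)$ are $C^\infty$ on all of $\R$ (each a finite combination of exponentials), and, by the choice of $\phi_0$ and of $r_0$ in \eqref{eq:defn_r0}, $W(1) = W'(1) = 0$. Second, since $X_h \subset \N$ is strictly increasing, $h_j = X_{j+1} - X_j \ge 1$ for every $j$, whence $\|V\|_{\ell^\infty} \le \|V\|_{\ell^1} \le \|V\|_{\ell^1_h}$; so $\ell^1_h \hookrightarrow \ell^\infty(\N)$ and every $V \in \ell^1_h$ satisfies $V_j \to 0$. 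In particular $V \mapsto V_0$ is a bounded linear functional on $\ell^1_h$, so $U \mapsto \gamma(1+U_0)$ — the composition of this functional with the smooth map $\gamma(1+\cdot)$ — is $C^\infty$ on $\ell^1_h$, with derivatives $V \mapsto \gamma'(1+U_0)V_0$ and $(V,W) \mapsto \gamma''(1+U_0)V_0 W_0$. It then remains to treat $\Ecb_h$.

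\emph{Well-definedness.} Taylor's theorem at $r = 1$, using $W(1) = W'(1) = 0$, gives for each $R > 0$ a constant $C_R := \smfrac12 \sup_{|t-1|\le R}|W''(t)| < \infty$ with $|W(1+s)| \le C_R s^2$ for $|s| \le R$. Given $U \in \ell^1_h$, take $R := \|U\|_{\ell^1_h} \ge \|U\|_{\ell^\infty}$; then $h_j |W(1+U_j)| \le C_R h_j U_j^2 \le C_R R\, h_j |U_j|$, and summing over $j$ shows the series for $\Ecb_h(U)$ converges absolutely.

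\emph{Variations.} Fix $U \in \ell^1_h$, put $R_0 := \|U\|_{\ell^1_h} + 1$ and $M_k := \sup_{|t-1|\le R_0}|W^{(k)}(t)|$. For the first variation, $L_U(V) := \sum_j h_j W'(1+U_j)V_j$ is a bounded functional on $\ell^1_h$ because $\sup_j |W'(1+U_j)| < \infty$ (as $U_j \to 0$); and for $\|V\|_{\ell^1_h} \le 1$, Taylor's theorem at $1+U_j$ gives $|W(1+U_j+V_j) - W(1+U_j) - W'(1+U_j)V_j| \le \smfrac12 M_2 V_j^2$, hence $|\Ecb_h(U+V) - \Ecb_h(U) - L_U(V)| \le \smfrac12 M_2 \sum_j h_j V_j^2 \le \smfrac12 M_2 \|V\|_{\ell^\infty}\|V\|_{\ell^1_h} \le \smfrac12 M_2 \|V\|_{\ell^1_h}^2 = o(\|V\|_{\ell^1_h})$, which gives $\del\Ecb_h(U) = L_U$. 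For the second variation, $B_U(V,W) := \sum_j h_j W''(1+U_j)V_jW_j$ is a bounded bilinear form on $\ell^1_h \times \ell^1_h$ (again because $\sup_j|W''(1+U_j)| < \infty$ and $\sum_j h_j |V_j||W_j| \le \|V\|_{\ell^1_h}\|W\|_{\ell^1_h}$), and Taylor's theorem applied to $W'$ gives $|W'(1+U_j+V_j) - W'(1+U_j) - W''(1+U_j)V_j| \le \smfrac12 M_3 V_j^2$, so for $\|V\|_{\ell^1_h} \le 1$ and any $W \in \ell^1_h$,
\[
  \B|\<\del\Ecb_h(U+V) - \del\Ecb_h(U) - B_U(V,\cdot),\, W\>\B|
  \le \smfrac12 M_3 \sum_j h_j V_j^2 |W_j|
  \le \smfrac12 M_3 \|V\|_{\ell^1_h}^2 \|W\|_{\ell^1_h}.
\]
Taking the supremum over $\|W\|_{\ell^1_h} \le 1$ shows this difference has operator norm $O(\|V\|_{\ell^1_h}^2)$, i.e.\ $\ddel\Ecb_h(U) = B_U$. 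Adding back the (already smooth) surface term $\gamma(1+U_0)$ yields the stated formulas for $\del\Escb_h$ and $\ddel\Escb_h$, and hence that $\Escb_h$ is twice Fr\'echet differentiable.

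\emph{Expected obstacle.} There is no genuine obstacle — the statement is elementary — but the point deserving care is the bookkeeping among the three norms $\|\cdot\|_{\ell^1_h} \ge \|\cdot\|_{\ell^1} \ge \|\cdot\|_{\ell^\infty}$, and recognising that the discrete hypothesis $X_h \subset \N$ (i.e.\ $h_j \ge 1$) is used essentially: it is precisely what makes $\|\cdot\|_{\ell^1_h}$ dominate $\|\cdot\|_{\ell^\infty}$, which is needed both to pass from summability of $(h_j|U_j|)_j$ to $U_j \to 0$ (hence to the uniform-in-$j$ bounds $|W^{(k)}(1+U_j)| \le M_k$) and to estimate the quadratic and cubic remainder sums $\sum_j h_j V_j^2(\cdots)$ by $\|V\|_{\ell^1_h}^2(\cdots)$. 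If the $h_j$ were allowed to vanish, $\Ecb_h$ would in general fail to be finite on $\ell^1_h$; and one must remember throughout that $W$ and $\gamma$ are globally $C^\infty$, not merely smooth near $r = 1$.
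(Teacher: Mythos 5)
Your argument is correct and follows essentially the same route as the paper's (very brief) appendix proof: Taylor expansion of the stored energy about the reference state, using $W(1)=W'(1)=0$, to get well-definedness, and Taylor expansion about a general state $U$ to verify the first and second Fr\'echet derivatives, with the surface term handled trivially through the bounded functional $V \mapsto V_0$. Your write-up is simply more explicit than the paper's, in particular in tracking the embeddings $\|\cdot\|_{\ell^\infty} \le \|\cdot\|_{\ell^1} \le \|\cdot\|_{\ell^1_h}$ that the paper leaves implicit.
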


\subsection{Analysis of the linearized models}
The parameter $r_0$ for the Morse potential was chosen so that $1$ is
the minimizer of the Cauchy--Born stored energy function, which
implies that
\begin{equation}
  \label{eq:cbsoln}
  U^\cb_j := 0, \quad \text{ for } j = 0, 1, \dots
\end{equation}
is the ground state of $\Ecb_h$. More generally, $u^\cb := (0)_{\x =
  0}^\infty$ gives the {\em bulk ground state} of the crystal
described by the model \eqref{eq:defn_Ea}. We now consider
linearisations of $\Escb_h$ and $\Ea$ about the Cauchy--Born state:
$\del E(0) + \ddel E(0) u = 0$, where $E \in \{ \Ea, \Escb_h\}$.

From Proposition \ref{th:properties_Ecb} we obtain the linearised
optimality condition for $\Escb_h$,
\begin{align*}
  \gamma'(1) + (h_0 W''(1) + \gamma''(1)) U_0 =~& 0, \quad \text{and} \\
  h_j W''(1) U_j =~& 0 \qquad \text{for } j = 1, 2, \dots,
\end{align*}
which gives the linearised surface Cauchy--Born solution
\begin{equation}
  \label{eq:scbsoln}
  U^\scb_0 = \frac{-\gamma'(1)}{h_0 W''(1)+\gamma''(1)},
  \quad \text{and} \quad U^\scb_j = 0, \quad \text{for } j = 1, 2, \dots\,.
\end{equation}

From Proposition \ref{th:properties_Ea} we obtain the linearised
optimality condition for the atomistic model $\Ea$,
\begin{align*}
  \phi'(1) + \phi''(1) u_0 + \phi'(2) + \phi''(2) (u_0+u_1) =~& 0, \\
  \phi'(1) + \phi''(1) u_j + 2 \phi'(2) + \phi''(2)(u_{j-1}+2u_j +
  u_{j+1}) =~& 0, \quad j \geq 1,
\end{align*}
which, using the fact that $\phi'(1)+2\phi'(2) = W'(1) = 0$ can be
rewritten in the form
\begin{align*}
  [\phi''(1)+\phi''(2)] u_0 + \phi''(2) u_1 =~& \phi'(2), \\
  \phi''(2) u_{\x-1} + [\phi''(1) +2 \phi''(2)] u_\x + \phi''(2) u_{\x+1}
  =~& 0, \quad \x \geq 1.
\end{align*}
This finite difference equation can be easily solved explicitly, which
yields the solution
\begin{equation}
  \label{eq:atm_soln}
  u^\a_\x := \frac{\phi'(2) \lambda^{\x}}{\phi''(1) +
    \phi''(2)(1+\lambda)}, 
  \qquad \text{where} \quad
  \lambda = \frac{\sqrt{1 + 4\smfrac{\phi''(2)}{\phi''(1)}} - 1 - 2
    \smfrac{\phi''(2)}{\phi''(1)}}{2 \smfrac{\phi''(2)}{\phi''(1)}}
\end{equation}
is the unique solution in $(0, 1)$ of the characteristic equation
\begin{displaymath}
  \phi''(2) \lambda^2 + [\phi''(1)+2\phi''(2)]\lambda + \phi''(2) = 0.
\end{displaymath}

Since the expressions for \eqref{eq:scbsoln} and \eqref{eq:atm_soln}
are somewhat bulky we expand them in the stiffness parameter
$\alpha$. The rationale for expanding in this parameter is that all
models should coincide in the limit $\alpha \to \infty$. We hope,
however, that our results will also yield useful predictions for
moderate $\alpha$. The elementary proof is postponed to the appendix.

\begin{proposition}
  \label{th:asymp}
  Asymptotically as $\alpha \to 0$ we have the expansions
  \begin{align}
    \label{eq:asymp:scb}
    U^\scb_0 =~& \smfrac{e^{-\alpha}}{h_0 \alpha} \b[ 1 -
      \b(1 + \smfrac{2}{h_0} \b) e^{-\alpha} + \OO(e^{-2\alpha}) \b], \quad \text{and} \\ 
    \label{eq:asymp:atm}
    u^\a_0 =~& \smfrac{e^{-\alpha}}{\alpha} \b[1 - 4
    e^{-\alpha} + \OO(e^{-2\alpha}) \b].
  \end{align}
\end{proposition}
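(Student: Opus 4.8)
The plan is to expand the closed-form expressions \eqref{eq:scbsoln} and \eqref{eq:atm_soln} directly in powers of $e^{-\alpha}$ as $\alpha\to\infty$; the only step needing a genuine (if short) argument is controlling the size of the characteristic root $\lambda$. First I would record the derivatives of the shifted Morse potential in a convenient form. Writing $s(r) := \exp(-\alpha(r-r_0))$, so that $\phi(r) = s(r)^2 - 2 s(r) - \phi_0$, differentiation gives
\begin{displaymath}
  \phi'(r) = 2\alpha\, s(r)\,\b(1 - s(r)\b), \qquad
  \phi''(r) = 2\alpha^2\, s(r)\,\b(2 s(r) - 1\b),
\end{displaymath}
while from \eqref{eq:1d_gamma} one has $\gamma'(1) = -\phi'(2)$ and $\gamma''(1) = -2\phi''(2)$. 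The definition \eqref{eq:defn_r0} of $r_0$ says precisely that $e^{\alpha(r_0-1)} = \rho$, where $\rho := (1+2e^{-\alpha})/(1+2e^{-2\alpha})$, so that $s(1) = \rho$ and $s(2) = e^{-\alpha}\rho$. Expanding the geometric series gives $\rho = 1 + 2e^{-\alpha} - 2e^{-2\alpha} + \OO(e^{-3\alpha})$, whence
\begin{align*}
  \phi'(2) &= 2\alpha e^{-\alpha}\rho\b(1 - e^{-\alpha}\rho\b) = 2\alpha e^{-\alpha}\b(1 + e^{-\alpha} + \OO(e^{-2\alpha})\b), \\
  \phi''(1) &= 2\alpha^2\rho(2\rho-1) = 2\alpha^2\b(1 + 6e^{-\alpha} + \OO(e^{-2\alpha})\b), \\
  \phi''(2) &= 2\alpha^2 e^{-\alpha}\rho\b(2e^{-\alpha}\rho - 1\b) = -2\alpha^2 e^{-\alpha}\b(1 + \OO(e^{-2\alpha})\b),
\end{align*}
all as $\alpha\to\infty$; the identity $W'(1) = \phi'(1) + 2\phi'(2) = 0$ already used above serves as a consistency check, and for $\alpha$ large $\phi''(2)<0\ne0$, so division by it is legitimate.

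For \eqref{eq:asymp:scb} I would then substitute $\gamma'(1) = -\phi'(2)$, $\gamma''(1) = -2\phi''(2)$ and $W''(1) = \phi''(1) + 4\phi''(2)$ into \eqref{eq:scbsoln} to get
\begin{displaymath}
  U^\scb_0 = \frac{\phi'(2)}{h_0\b(\phi''(1) + 4\phi''(2)\b) - 2\phi''(2)}.
\end{displaymath}
With the expansions above the numerator equals $2\alpha e^{-\alpha}\b(1 + e^{-\alpha} + \OO(e^{-2\alpha})\b)$ and the denominator equals $2\alpha^2 h_0\b(1 + (2 + \smfrac{2}{h_0})e^{-\alpha} + \OO(e^{-2\alpha})\b)$; dividing and using $(1+x)^{-1} = 1 - x + \OO(x^2)$ collapses the $e^{-\alpha}$ terms to $1 + e^{-\alpha} - (2 + \smfrac{2}{h_0})e^{-\alpha} = 1 - (1 + \smfrac{2}{h_0})e^{-\alpha}$, which gives \eqref{eq:asymp:scb}.

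For \eqref{eq:asymp:atm} the denominator in \eqref{eq:atm_soln} is $\phi''(1) + \phi''(2)(1+\lambda)$, so I first need the size of $\lambda$. Dividing the characteristic equation by $\phi''(2)$ gives $\lambda^2 + (t+2)\lambda + 1 = 0$ with $t := \phi''(1)/\phi''(2) = -e^{\alpha}\b(1+\OO(e^{-\alpha})\b)$; by Vieta's relations the two roots multiply to $1$ and sum to $-(t+2) = e^{\alpha} + \OO(1)$, so the root in $(0,1)$ — the one appearing in \eqref{eq:atm_soln} — satisfies $\lambda = e^{-\alpha} + \OO(e^{-2\alpha})$. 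Consequently $\phi''(2)\lambda = \OO(\alpha^2 e^{-2\alpha})$ perturbs the denominator only at relative order $e^{-2\alpha}$, so $\phi''(1) + \phi''(2)(1+\lambda) = 2\alpha^2\b(1 + 5e^{-\alpha} + \OO(e^{-2\alpha})\b)$; dividing $\phi'(2)$ by this and expanding yields $\smfrac{e^{-\alpha}}{\alpha}\b(1 + e^{-\alpha} - 5e^{-\alpha} + \OO(e^{-2\alpha})\b) = \smfrac{e^{-\alpha}}{\alpha}\b(1 - 4e^{-\alpha} + \OO(e^{-2\alpha})\b)$, which is \eqref{eq:asymp:atm}.

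The main obstacle is bookkeeping rather than mathematics: one must carry the $\OO(e^{-2\alpha})$ remainders consistently through two successive divisions, and one must justify the $\OO(e^{-\alpha})$ bound on $\lambda$ (via the Vieta relations, as above) before discarding the $\phi''(2)\lambda$ contribution. Everything else is direct substitution into \eqref{eq:scbsoln} and \eqref{eq:atm_soln}.
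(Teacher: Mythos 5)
Your proposal is correct and follows essentially the same route as the paper: expand $\phi'(2)$, $\phi''(1)$, $\phi''(2)$ in powers of $e^{-\alpha}$ by substituting the definition \eqref{eq:defn_r0} of $r_0$, then divide these expansions in the closed-form expressions \eqref{eq:scbsoln} and \eqref{eq:atm_soln}. The only deviation is your treatment of $\lambda$: you obtain the bound $\lambda = e^{-\alpha} + \OO(e^{-2\alpha})$ via Vieta's relations instead of expanding the explicit root formula as the paper does, which is a perfectly adequate (indeed slightly leaner) argument for \eqref{eq:asymp:atm}, though it does not deliver the sharper expansion \eqref{eq:asymp_lambda} that the paper reuses later in the proof of Proposition \ref{th:errp}.
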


\begin{remark}
  The asymptotic expansions \eqref{eq:asymp:scb} and
  \eqref{eq:asymp:atm} justify {\it a posteriori} the linearisation
  since they show that the displacements from the Cauchy--Born state
  are indeed small in the limit as $\alpha \to \infty$.
\end{remark}

\subsection{Error estimates}
We first note that each $\PO$ function $U = (U_j)_{j = 0}^\infty$ can
be understood as a lattice function $u = (u_\x)_{\x = 0}^\infty$
through the interpolation
\begin{displaymath}
  u_\x = U_j \quad \text{for } \x = X_j, \dots, X_{j+1}-1, \quad j \in \N.
\end{displaymath}
With this interpolation we obtain $u^\cb = 0$ and $u^\scb$ from the
linearized CB and SCB solutions $U^\cb$ and $U^\scb$, given in
\eqref{eq:scbsoln}.

We are interested in the improvement the SCB model gives over the pure
Cauchy--Born model, that is, we wish to measure the relative errors
\begin{displaymath}
  \Err_p := \frac{ \|u^\scb - u^\a \|_{\ell^p}}{\|u^\cb - u^\a
      \|_{\ell^p}}
    = \frac{ \|u^\scb - u^\a \|_{\ell^p}}{\| u^\a \|_{\ell^p}}.
\end{displaymath}
Of particular interest are the uniform error $\Err_\infty$ and the
error in the energy-norm $\Err_2$. We shall consider two separate
cases: $h_0 > 1$ and $h_0 = 1$.

\begin{proposition}[Strain error]
  \label{th:errp}
  Let $p \in [1, \infty]$ and $h_0 > 1$, then 
  \begin{equation}
    \label{eq:errp}
    \Err_p = C_p + \OO(e^{-\alpha}),
 \end{equation}
 where $\smfrac12 \leq C_p \leq 2$.  If $h_0 = 1$, then
 \begin{equation}
   \label{eq:errp_1}
   \Err_p = 2^{1/p} e^{-\alpha} + \OO(e^{-2\alpha}).
 \end{equation}
\end{proposition}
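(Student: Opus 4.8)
The plan is to compute both numerator and denominator of $\Err_p$ explicitly using the closed-form linearized solutions and then extract their leading-order behaviour in $e^{-\alpha}$. First I would note that, by \eqref{eq:scbsoln} and \eqref{eq:cbsoln}, the interpolated functions satisfy $u^\scb_\x - u^\a_\x = -u^\a_\x$ for all $\x \geq h_0$ (since $U^\scb_j = 0 = U^\cb_j$ for $j\geq 1$, and the first element occupies $\x = 0,\dots,h_0-1$), while for $0 \leq \x \leq h_0-1$ we have $u^\scb_\x - u^\a_\x = U^\scb_0 - u^\a_\x$. Likewise $u^\cb_\x - u^\a_\x = -u^\a_\x$ for all $\x$. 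Using the geometric form $u^\a_\x = u^\a_0 \lambda^\x$ from \eqref{eq:atm_soln}, the denominator $\|u^\a\|_{\ell^p}$ is a geometric sum: $\|u^\a\|_{\ell^p}^p = |u^\a_0|^p \sum_{\x\geq 0}\lambda^{p\x} = |u^\a_0|^p/(1-\lambda^p)$ for $p<\infty$, and $\|u^\a\|_{\ell^\infty} = |u^\a_0|$ (as $\lambda \in (0,1)$).

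The numerator splits as $\|u^\scb - u^\a\|_{\ell^p}^p = \sum_{\x=0}^{h_0-1}|U^\scb_0 - u^\a_0\lambda^\x|^p + |u^\a_0|^p\,\lambda^{p h_0}/(1-\lambda^p)$. The key observation driving the two cases is the relative size of $U^\scb_0$ and $u^\a_0$. From Proposition \ref{th:asymp}, both are $\smfrac{e^{-\alpha}}{\alpha}(1 + \OO(e^{-\alpha}))$ up to the factor $1/h_0$ in the SCB case; more precisely $U^\scb_0 = \smfrac{1}{h_0}u^\a_0(1 + \OO(e^{-\alpha}))$. Also one should check $\lambda = \OO(e^{-\alpha})$ from its defining quadratic (since $\phi''(2)/\phi''(1) \to 0$ as $\alpha\to\infty$ — this needs the asymptotics $\phi''(1) \sim 2\alpha^2$, $\phi''(2) \sim -2\alpha^2 e^{-\alpha}$, which follow from differentiating the Morse potential and using \eqref{eq:defn_r0}). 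Hence $\lambda^{\x} = \OO(e^{-\x\alpha})$, so in the sum over $\x = 0,\dots,h_0-1$ only the $\x=0$ term $|U^\scb_0 - u^\a_0|^p = |u^\a_0|^p|1 - 1/h_0|^p(1+\OO(e^{-\alpha}))$ contributes at leading order when $h_0 > 1$ (the tail term $\lambda^{ph_0}/(1-\lambda^p)$ and the terms $\x\geq 1$ are all smaller by a factor $e^{-p\alpha}$ or more). Dividing, $\Err_p \to |1 - 1/h_0|$ for $p<\infty$ and the same for $p=\infty$, giving $C_p = |1-h_0^{-1}| \in [1/2, 1)$ when $h_0 \geq 2$; combined with the $\OO(e^{-\alpha})$ correction from the asymptotic expansions one gets the window $[\tfrac12, 2]$ stated (the $2$ being a safe upper bound absorbing the corrections and the non-integer-$h_0$ behaviour). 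When $h_0 = 1$, the $\x=0$ term vanishes at leading order because $U^\scb_0 - u^\a_0 = u^\a_0(1/h_0 - 1)(1+\dots) + \OO(u^\a_0 e^{-\alpha}) = \OO(u^\a_0 e^{-\alpha})$, so the dominant contribution to the numerator is instead the tail $|u^\a_0|^p \lambda^{p}/(1-\lambda^p) \approx |u^\a_0|^p\lambda^p$ together with a possible contribution from the $\OO(e^{-\alpha})$ correction in $U^\scb_0 - u^\a_0$; one then needs $\lambda = 2e^{-\alpha}(1+\OO(e^{-\alpha}))$, so $\|u^\scb - u^\a\|_{\ell^p} \approx |u^\a_0|\cdot 2e^{-\alpha}$ for $p=\infty$ and $\approx |u^\a_0|\cdot 2e^{-\alpha}$ also for finite $p$ since $(1-\lambda^p)^{-1/p}\to 1$... wait, for finite $p$ the numerator is $|u^\a_0|^p(2e^{-\alpha})^p(1+\dots)/(1-\lambda^p)$ and the denominator $|u^\a_0|^p/(1-\lambda^p)$, so the ratio is $(2e^{-\alpha})^p$, giving $\Err_p = 2e^{-\alpha}(1+\OO(e^{-\alpha}))$ — but the claim has $2^{1/p}e^{-\alpha}$, which means the leading SCB correction term at $\x=0$ must actually contribute at the same order and combine with the tail; I would track the $e^{-\alpha}$ coefficients in $U^\scb_0 - u^\a_0$ carefully and confirm that the $\x=0$ term is $\sim |u^\a_0 e^{-\alpha}|^p$ while the tail is $\sim |u^\a_0|^p(2e^{-\alpha})^p\cdot$(smaller), reconciling to $2^{1/p}$.

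The main obstacle, therefore, is the $h_0 = 1$ case: here leading-order cancellation forces one to carry the $\OO(e^{-\alpha})$ terms in both $U^\scb_0$ and $u^\a_0$ (via Proposition \ref{th:asymp}) and in $\lambda$ simultaneously, and to identify precisely which surviving $e^{-\alpha}$-order term dominates in each $\ell^p$ norm so that the constant comes out as exactly $2^{1/p}$ rather than merely $\OO(e^{-\alpha})$. The $h_0>1$ case is comparatively routine once the geometric sums are set up, since there no cancellation occurs and the $\OO(e^{-\alpha})$ error bound is generous. I would organize the write-up as: (i) reduce both norms to explicit formulae in $u^\a_0$, $U^\scb_0$, $\lambda$; (ii) insert the asymptotics of $\lambda$, $u^\a_0$, $U^\scb_0$; (iii) treat $h_0 > 1$; (iv) treat $h_0 = 1$ with the finer expansion.
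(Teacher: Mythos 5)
Your overall framework coincides with the paper's (write both norms as explicit geometric sums in $u^\a_0$, $U^\scb_0$, $\lambda$ and insert the asymptotics of Proposition \ref{th:asymp}), but the execution has a genuine error in the $h_0>1$ case. You discard the terms $\x=1,\dots,h_0-1$ on the grounds that they are ``smaller by a factor $e^{-p\alpha}$''; that is false. On those sites the interpolated SCB strain is still $U^\scb_0 \approx \smfrac{1}{h_0}\smfrac{e^{-\alpha}}{\alpha}$, while $u^\a_\x = u^\a_0\lambda^\x = \OO(e^{-2\alpha}/\alpha)$, so each of these $h_0-1$ terms contributes $|U^\scb_0|^p \sim (u^\a_0/h_0)^p$ --- the same order as the $\x=0$ term. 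The correct leading constant is $C_p = \big((1-1/h_0)^p + (h_0-1)\,h_0^{-p}\big)^{1/p}$ (with the obvious interpretation for $p=\infty$), not $|1-1/h_0|$; for instance $C_1 = 2(1-1/h_0)$, which approaches $2$ for large $h_0$ --- this, rather than ``absorbing corrections,'' is why the stated range extends up to $2$. Your claimed limit $C_p=|1-1/h_0|\in[\smfrac12,1)$ is simply incorrect (for $p=1$, $h_0=5$ the true value is $1.6$), even though it happens to fall inside the stated window.

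The $h_0=1$ case, which you rightly identify as the delicate one, is left unresolved and contains a further error: by \eqref{eq:asymp_lambda}, $\lambda = e^{-\alpha} - 4e^{-2\alpha} + \OO(e^{-3\alpha})$, not $2e^{-\alpha}(1+\OO(e^{-\alpha}))$. The actual mechanism producing $2^{1/p}$ is the one you only conjectured: by \eqref{eq:asymp:scb} with $h_0=1$ and \eqref{eq:asymp:atm}, the boundary residual is $|U^\scb_0 - u^\a_0| = \smfrac{e^{-\alpha}}{\alpha}\big|(1-3e^{-\alpha}) - (1-4e^{-\alpha})\big| + \OO(e^{-3\alpha}/\alpha) = \smfrac{e^{-2\alpha}}{\alpha} + \OO(e^{-3\alpha}/\alpha)$, while the tail contributes $\lambda\, u^\a_0 (1-\lambda^p)^{-1/p} = \smfrac{e^{-2\alpha}}{\alpha} + \OO(e^{-3\alpha}/\alpha)$; these two contributions are equal at leading order, so their combined $\ell^p$ norm is $2^{1/p} e^{-2\alpha}/\alpha$ up to higher-order terms, and dividing by $\|u^\a\|_{\ell^p} = e^{-\alpha}/\alpha + \OO(e^{-2\alpha}/\alpha)$ gives \eqref{eq:errp_1}. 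Since you never carry out this cancellation-and-recombination computation (you explicitly defer it), the key step of the $h_0=1$ estimate is missing from the proposal.
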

\begin{proof}
  We consider the case $h_0 = 1$ first. In that case
  \eqref{eq:asymp:atm} gives us
  \begin{align*}
    \bg(\sum_{\x = 1}^\infty |u_\x^\scb - u_\x^\a|^p\bg)^{1/p} =~&
    \bg(\sum_{\x = 1}^\infty |u_\x^\a|^p\bg)^{1/p} 
    = \lambda u_0^\a \b(1 - \lambda^p\b)^{-1/p},
  \end{align*}
  and similarly, $\| u^\a \|_{\ell^p} = u_0^\a (1-\lambda^p)^{-1/p}$.
  Using the asymptotic expansions \eqref{eq:asymp_lambda} for
  $\lambda$ it is straightforward to show that 
  \begin{displaymath}
    (1-\lambda^p)^{-1/p} = 1 + \OO(\lambda) =
    1 + \OO(e^{-\alpha});
  \end{displaymath}
  hence employing also \eqref{eq:asymp:atm} we obtain
  \begin{equation}
    \label{eq:errp:10}
    \| u^\a \|_{\ell^p} = \frac{e^{-\alpha}}{\alpha} +
    \OO\b(\smfrac{e^{-2\alpha}}{\alpha}\b), \quad \text{and} \quad
    \bg(\sum_{\x = 1}^\infty |u_\x^\scb - u_\x^\a|^p\bg)^{1/p}
    = \frac{e^{-2\alpha}}{\alpha} + \OO\b(\smfrac{e^{-3\alpha}}{\alpha}\b).
  \end{equation}

  For $\x = 0$, since $h_0 = 1$, we have
  \begin{align*}
    \b| u_0^\scb - u_0^\a \b| = \B| \smfrac{e^{-\alpha}}{\alpha} \b[ 1
    - 3 e^{-\alpha} + \OO(e^{-2\alpha}) \b] -
    \smfrac{e^{-\alpha}}{\alpha} \b[1 - 4 e^{-\alpha} +
    \OO(e^{-2\alpha})\b] 
    = \smfrac{e^{-2\alpha}}{\alpha} + \OO\b( \smfrac{e^{-3\alpha}}{\alpha} \b).
  \end{align*}
  Combined with \eqref{eq:errp:10} this gives
  \begin{align*}
    \Err_p = \frac{\| u^\a - u^\scb \|_{\ell^p}}{\|u^\a\|_{\ell^p}} 
    = \frac{2^{1/p} \smfrac{e^{-2\alpha}}{\alpha} +
      \OO\b(\smfrac{e^{-3\alpha}}{\alpha}\b)}{\frac{e^{-\alpha}}{\alpha} +
      \OO\b(\smfrac{e^{-2\alpha}}{\alpha}\b)} 
    = 2^{1/p} e^{-\alpha} + \OO(e^{-2\alpha}),
  \end{align*}
  which concludes the proof of \eqref{eq:errp_1}.

  In the case $h_0 > 1$ the convenient cancellation of first-order
  terms in $u_0^\scb - u_0^\a$ does not occur. Instead, using
  \eqref{eq:errp:10} we obtain
  \begin{align*}
    \| u^\a - u^\scb \|_{\ell^p} =~& \frac{e^{-\alpha}}{\alpha}
    \B( \b| 1 - \smfrac{1}{h_0}\b|^p + \sum_{\x = 1}^{X_1-1}
    \b|\smfrac{1}{h_0}\b|^p \B)^{1/p} + \OO\b(\smfrac{e^{-2\alpha}}{\alpha}\b)
  \end{align*}
  This immediately gives \eqref{eq:errp}.
\end{proof}

We see from \eqref{eq:errp} that if we use a coarse finite element
mesh up to the boundary, then the error in the displacement gradient
will be typically of the order $50\%$ or more. By contrast, if we
refine the finite element mesh to atomistic precision at the boundary
then the relative error is exponentially small in the stiffness
parameter $\alpha$.

The quantity $\Err_p$ measures the error in a pointwise
sense. However, in some cases we are only interested in correctly
reproducing certain macroscopic quantities such as the mean strain
error
\begin{displaymath}
  \ol{\Err} := \B|\frac{\sum_{\x = 0}^\infty (u_\x^\scb -
    u_\x^\a)}{\sum_{\x = 0}^\infty u_\x^\a} \B|.
\end{displaymath}
Note that, up to higher order terms, this error also bounds the error
in the displacements at the finite element nodes, which we observed in
Figure \ref{fig:err_intro} to be much smaller than the strain error.

In the following result we confirm that, indeed, the mean strain error
is an order of magnitude smaller than the pointwise strain error.

\begin{proposition}[Mean strain error]
  \label{th:errm}
  Asymptotically as $\alpha \to \infty$, the mean strain error
  satisfies
  \begin{equation}
    \label{eq:errm}
    \ol{\Err} =  2 \b( 1 - \smfrac{1}{h_0}\b)
    e^{-\alpha} + \OO(e^{-2\alpha}).
  \end{equation}
\end{proposition}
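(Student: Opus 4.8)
The plan is to put the numerator and denominator of $\ol{\Err}$ into closed form and then feed in the asymptotics of Proposition~\ref{th:asymp}. For the SCB term, the interpolation convention together with \eqref{eq:scbsoln} and $X_1 = h_0$ gives $u^\scb_\x = u^\scb_0$ for $\x = 0, \dots, h_0-1$ and $u^\scb_\x = 0$ for $\x \geq h_0$, so $\sum_{\x=0}^\infty u^\scb_\x = h_0 u^\scb_0$. For the atomistic term, \eqref{eq:atm_soln} gives $u^\a_\x = u^\a_0 \lambda^\x$, hence summing the geometric series, $\sum_{\x=0}^\infty u^\a_\x = u^\a_0/(1-\lambda)$. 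Thus
\[
  \ol{\Err} = \B| \frac{h_0 u^\scb_0 - u^\a_0/(1-\lambda)}{u^\a_0/(1-\lambda)} \B|.
\]

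Next I would substitute the expansions. By \eqref{eq:asymp:scb}, $h_0 u^\scb_0 = \smfrac{e^{-\alpha}}{\alpha}\b[1 - (1 + \smfrac{2}{h_0})e^{-\alpha} + \OO(e^{-2\alpha})\b]$; by \eqref{eq:asymp:atm}, $u^\a_0 = \smfrac{e^{-\alpha}}{\alpha}\b[1 - 4 e^{-\alpha} + \OO(e^{-2\alpha})\b]$; and the expansion \eqref{eq:asymp_lambda} for $\lambda$ gives $\lambda = e^{-\alpha} + \OO(e^{-2\alpha})$, so $(1-\lambda)^{-1} = 1 + e^{-\alpha} + \OO(e^{-2\alpha})$ and therefore
\[
  \frac{u^\a_0}{1-\lambda} = \frac{e^{-\alpha}}{\alpha}\b[1 - 3 e^{-\alpha} + \OO(e^{-2\alpha})\b].
\]
The decisive observation is that $h_0 u^\scb_0$ and $u^\a_0/(1-\lambda)$ agree at leading order $\smfrac{e^{-\alpha}}{\alpha}$ --- this is the quantitative content of the near-exactness seen in Figure~\ref{fig:err_intro} --- so their difference is one order smaller:
\[
  \sum_{\x=0}^\infty (u^\scb_\x - u^\a_\x) = \frac{e^{-\alpha}}{\alpha}\b[\b(3 - 1 - \smfrac{2}{h_0}\b)e^{-\alpha} + \OO(e^{-2\alpha})\b] = \frac{2 e^{-2\alpha}}{\alpha}\b(1 - \smfrac{1}{h_0}\b) + \OO\b(\smfrac{e^{-3\alpha}}{\alpha}\b).
\]
Dividing by the denominator $u^\a_0/(1-\lambda) = \smfrac{e^{-\alpha}}{\alpha}\b(1 + \OO(e^{-\alpha})\b)$ and using $h_0 \geq 1$ to drop the absolute value then yields \eqref{eq:errm}.

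There is no serious obstacle here; the work is careful bookkeeping of the $e^{-\alpha}$ expansions. The one substantive point worth emphasising is the exact cancellation of the $\smfrac{e^{-\alpha}}{\alpha}$ terms in the numerator: it is forced by the SCB energy being exact under homogeneous deformations (so that $h_0 U^\scb_0$ reproduces the correct linear response of the boundary layer to leading order), and it is precisely this cancellation that makes $\ol{\Err}$ an order of magnitude smaller than the pointwise strain error $\Err_p$ of Proposition~\ref{th:errp}. A minor item to verify carefully is that the first-order coefficient in the expansion $\lambda = e^{-\alpha} + \OO(e^{-2\alpha})$ is exactly $1$; although it only enters the $\OO(e^{-2\alpha})$ remainder, it must be tracked to confirm the stated leading term, and it follows from the asymptotics of $\phi''(1)$ and $\phi''(2)$ already used in the proof of Proposition~\ref{th:asymp}.
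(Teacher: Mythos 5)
Your proof is correct and follows essentially the same route as the paper's: closed forms $\sum_\x u^\scb_\x = h_0 U^\scb_0$ and $\sum_\x u^\a_\x = u^\a_0/(1-\lambda)$, insertion of the expansions from Proposition~\ref{th:asymp} and \eqref{eq:asymp_lambda}, cancellation of the leading $\smfrac{e^{-\alpha}}{\alpha}$ terms, and division by the atomistic mean strain. The bookkeeping matches the paper's computation exactly, so nothing further is needed.
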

\begin{proof}
  We first compute the mean strains in the atomistic and the SCB
  models. For the atomistic model we have
 \begin{displaymath}
    \ol{u}^\a := \sum_{\x = 0}^\infty u_\x^\a = \frac{u_0^\a}{1 -
      \lambda}
  \end{displaymath}
  Since $(1-\lambda)^{-1} = 1 + e^{-\alpha} + \OO(e^{-2\alpha})$ we
  obtain 
  \begin{displaymath}
    \ol{u}^\a = \smfrac{e^{-\alpha}}{\alpha}\b[(1 - 4 e^{-\alpha})(1 +
    e^{-\alpha}) + \OO(e^{-2\alpha})\b]
    = \smfrac{e^{-\alpha}}{\alpha}\b[1 - 3 e^{-\alpha} + \OO(e^{-2\alpha})\b],
  \end{displaymath}

  For the SCB model, we have
  \begin{displaymath}
    \ol{u}^\scb = \sum_{j = 0}^\infty h_j U_j^\scb =  h_0 U_0^\scb
    = \smfrac{e^{-\alpha}}{\alpha} \b[ 1 -
      \b(1 + \smfrac{2}{h_0} \b) e^{-\alpha} + \OO(e^{-2\alpha})\b],    
  \end{displaymath}
  and hence the error is given by
  \begin{displaymath}
    \ol{u}^\scb - \ol{u}^\a = 2 \b( 1 - \smfrac{1}{h_0}\b)
    \smfrac{e^{-2\alpha}}{\alpha} + \OO\b(\smfrac{e^{-3\alpha}}{\alpha}\b).
  \end{displaymath}
  This immediately implies \eqref{eq:errm}.
\end{proof}

\begin{remark}
  Since $\Ea$ and $\Escb$ are Fr\'{e}chet differentiable in suitable
  function spaces it should be possible, using nonlinear analysis
  techniques such as the inverse function theorem, to extend the
  results from the linearized model problem to the fully nonlinear
  problem, provided that the stiffness parameter $\alpha$ is
  sufficiently large. Techniques of this kind have been used, for
  example, in \cite{Theil:2011}.
\end{remark}

\subsection{Numerical results}
\label{sec:num1d}
We confirm through numerical experiments that the results of
Propositions \ref{th:errp} and \ref{th:errm} are still valid in the
nonlinear setting. In these experiments we choose $r_0 = 1$ instead of
\eqref{eq:defn_r0}, choose a finite chain with $31$ atoms, and let
$\alpha$ vary between $2$ and $7$. For experiments with $h_0 = 5$ the
gridpoints for the Cauchy--Born and SCB models are chosen as $X = (0,
5, 10, \dots, 30)$. For experiments with $h_0 = 1$, the gridpoints are
chosen as $X = (0, 1, 5, \dots, 25, 29, 30)$.

The results of the experiments are displayed in Figures
\ref{fig:err1d_err2} and \ref{fig:err1d_errm}. All results except for
the relative error in the mean strain with $h_0 = 1$ confirm our
analytical results in the linearized case. We have, at present, no
explanation why the mean strain error $\ol\Err$ with $h_0 = 1$ is of
the order $\OO(e^{-3\alpha})$ instead of $\OO(e^{-2\alpha})$. A finer
asymptotic analysis in the linearized case would in fact give the
expansion $\ol{\Err} = 2 e^{-2\alpha} + \OO(e^{-3\alpha})$.

\begin{figure}[t]
  \includegraphics[width=10cm]{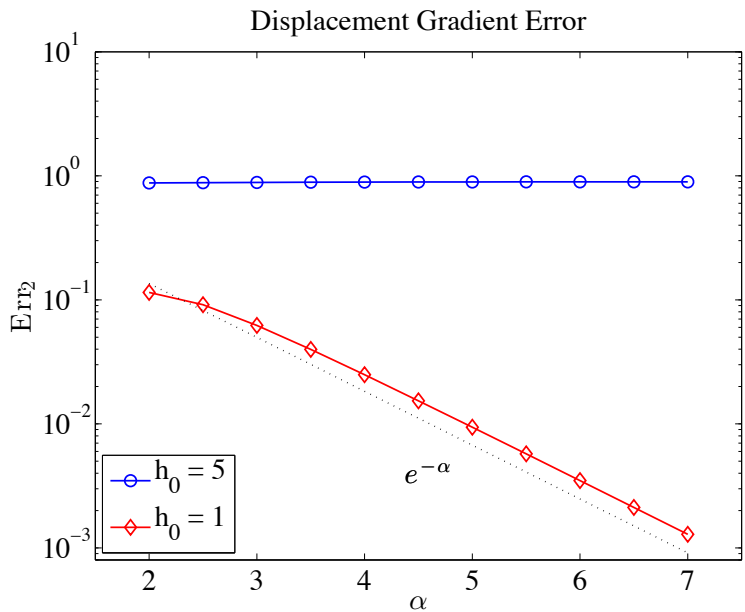}
  \caption{\label{fig:err1d_err2} Relative error in the
    $W^{1,2}$-seminorm of the 1D nonlinear SCB model for varying
    stiffness parameter $\alpha$ and two types of finite element
    grids; cf. Section \ref{sec:num1d}.}
\end{figure}

\begin{figure}[t]
 \quad
  \includegraphics[width=10cm]{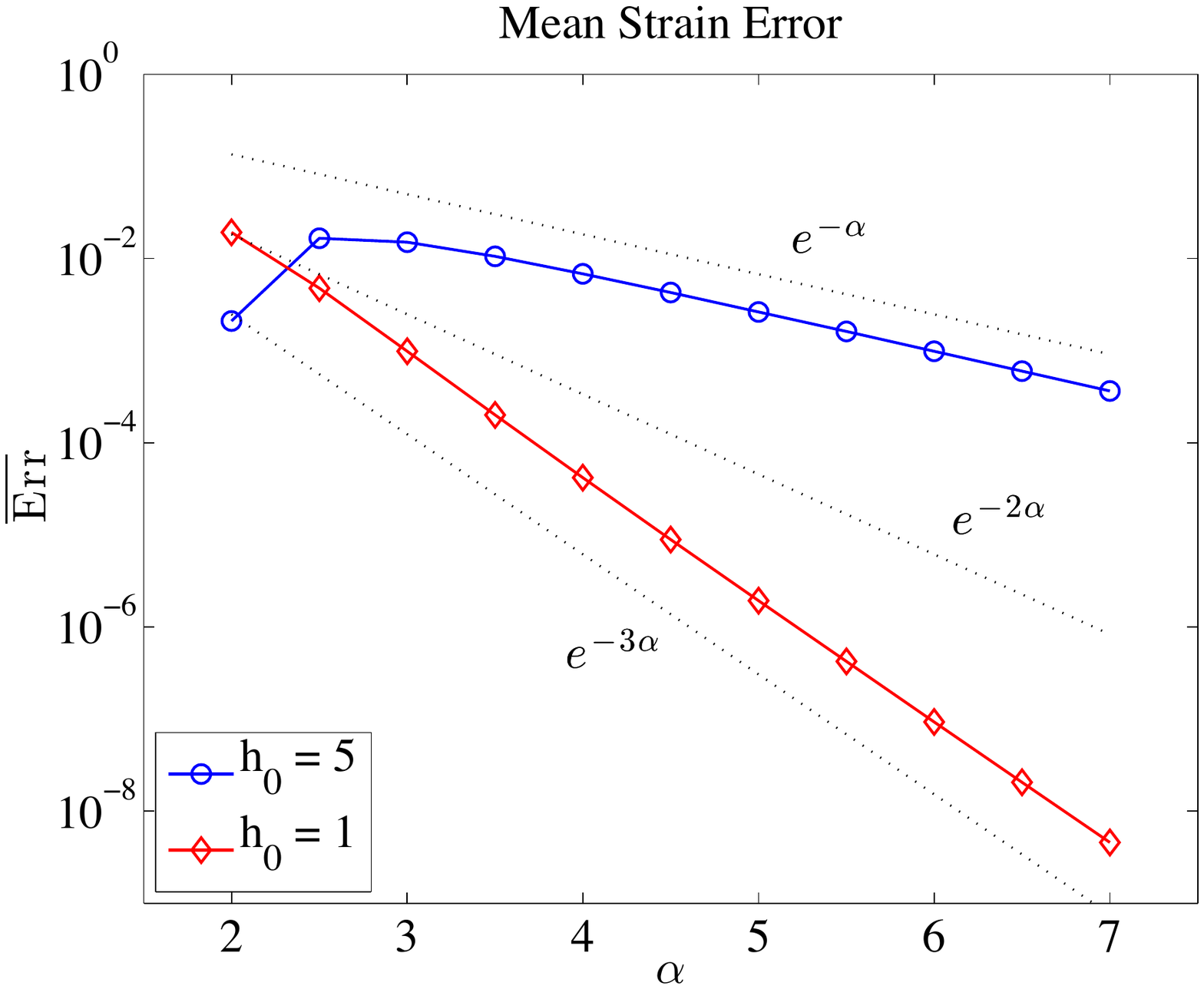}
  \caption{\label{fig:err1d_errm} Relative error in the mean strain of
    the 1D nonlinear SCB model for varying stiffness parameter
    $\alpha$ and two types of finite element grids; cf. Section
    \ref{sec:num1d}.}
\end{figure}

\section{Numerical Results in 2D}
\label{sec:2d}
In this section we investigate numerically, to what extent the 1D
results might extend to the 2D setting. We will formulate a
problem in a semi-infinite strip, where we expect relaxation only in
the normal direction to the surfaces. Hence the problem reduces to a
1D problem for the displacements in that direction. The 1D analysis
can be applied to this case with only minor changes, and we therefore
expect the same behaviour as in the 1D case. This is fully confirmed
by the results of our numerical experiment.

\subsection{Formulation of the SCB method}
In 2D one expects (this is rigorously proven only for large stiffness
parameter $\alpha$ \cite{Theil:2006}) that the ground-state under
Morse potential interaction is the triangular lattice. Hence we choose
as the atomistic reference configuration a subset $\L \subset \mA
\Z^2$, where
\begin{displaymath}
  \mA = \mymat{ 1 & 1/2 \\ 0 & \sqrt{3}/2 }.
\end{displaymath}
For future reference, we define $a_1 := (1,0), a_2 := (1/2,
\sqrt{3}/2)$ and $a_3 := (-1/2, \sqrt{3}/2)$, which are the directions
of nearest-neighbour bonds. 

Specifically, we choose $N_1, N_2 \in \N$ and define
\begin{displaymath}
  \L := \b\{ \mA (n_1, n_2)^T \in \Z^2 \bsep 1 < n_1 \leq N_1, 0 \leq
  n_2 \leq N_2 \b\},
\end{displaymath}
as the periodic cell of the semi-infinite strip $\L^\per := \{ \mA
(n_1, n_2)^T \in \Z^2 \bsep 0 \leq n_2 \leq N_2 \}$; cf. Figure
\ref{fig:scb_strip}. The corresponding continuous domain is $\Om :=
\mA ( (0, N_1] \times (0, N_1] )$.

\begin{figure}[t]
  \includegraphics[height=4.2cm]{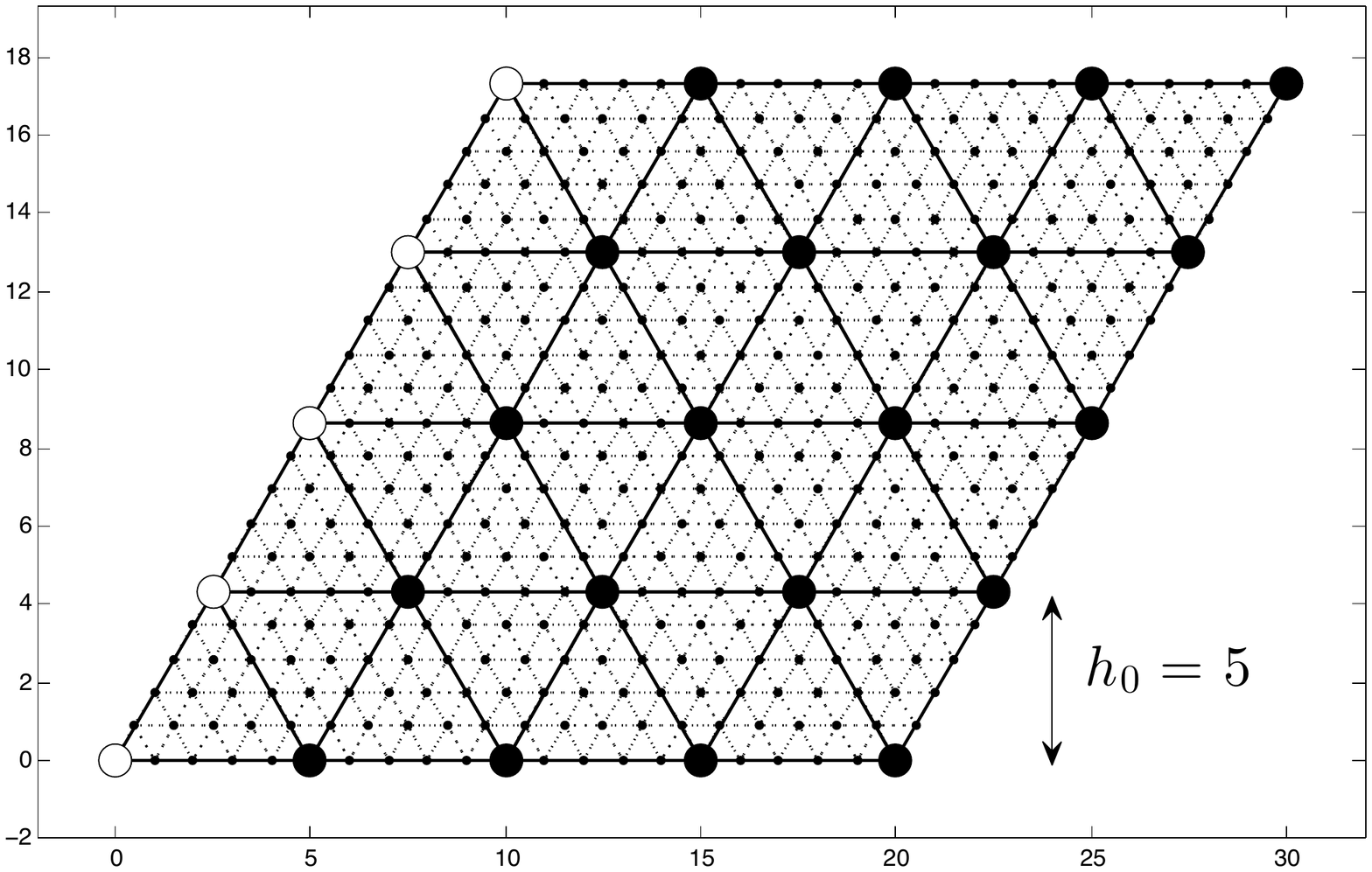}  \hspace{-4mm}
  \includegraphics[height=4.2cm]{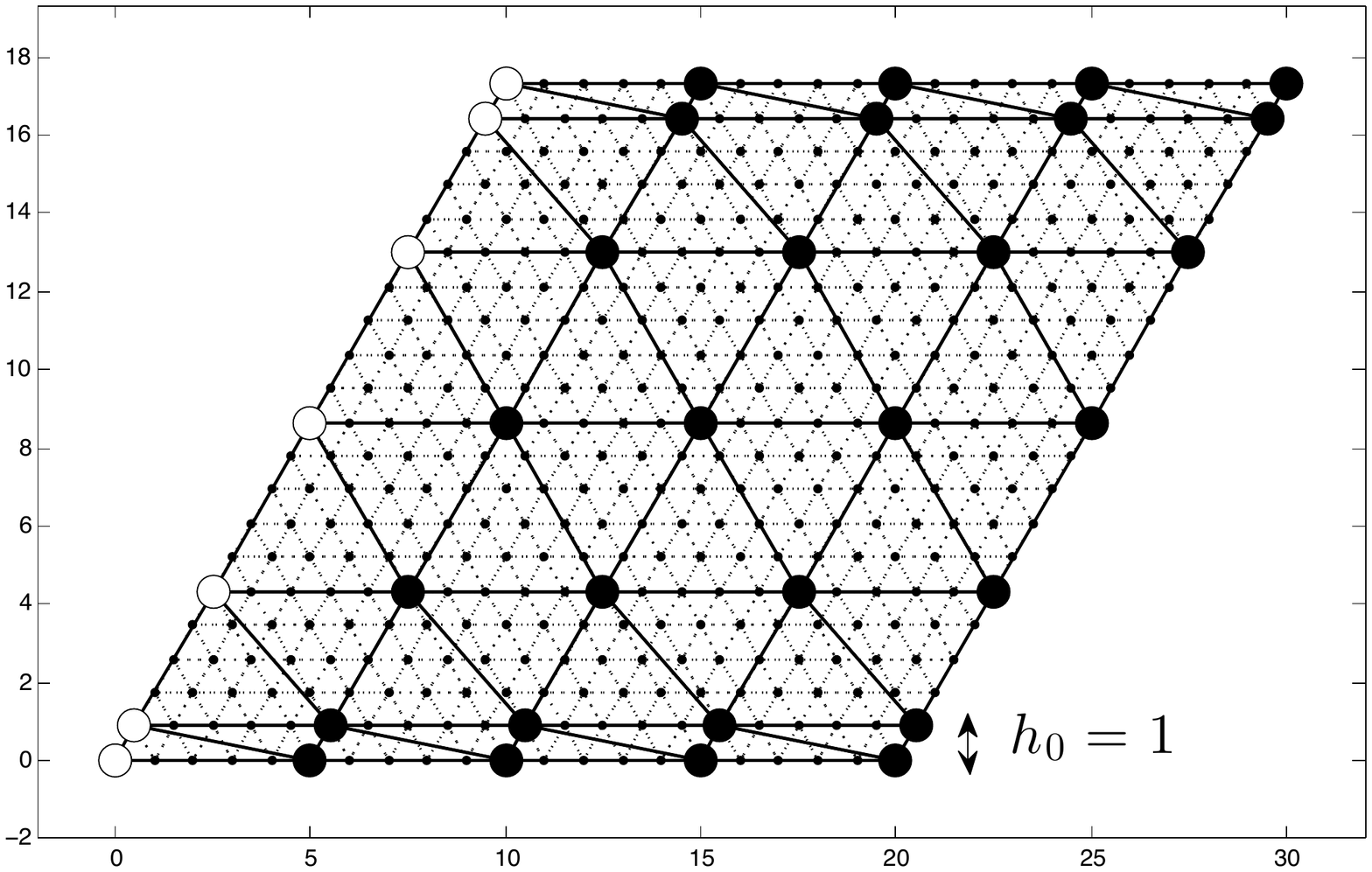}\\
  (a) \hspace{6cm} (b) 
  \caption{\label{fig:scb_strip} Computational domain used in the
    numerical experiment described in Section \ref{sec:2d}. The small
    disks denote the set $\L$; the dotted grid is the
    micro-triangulation $\Tm$; the the large black disks denote the
    finite element nodes; the large white discs denote finite element
    nodes that are periodically repeated; the black lines denote the
    macro-triangulation $\Th$.}
\end{figure}

An admissible deformed configuration is a map $y : \L^\per \to \R^2$,
which is periodic in the $a_1$-direction, that is, $y(\xi+N_1 a_1) =
y(\xi) + N_1 a_1$.

For simplicity we consider only second-neighbour interactions
(measured in hopping distance). For each $\xi \in \L$ let $\Ng_\xi :=
\{ \eta \in \L^\per \sep |\eta - \xi| \leq 2 \}$ denote the
interaction neighbourhood of $\xi$, then the potential energy of a
deformed configuration is given by
\begin{displaymath}
  \Ea(y) := \sum_{\xi \in \L} \frac12 \sum_{\eta \in \Ng_\xi} \phi\b(
  |y(\eta) - y(\xi)|\b),
\end{displaymath}
where $\phi$ is again the Morse potential. 

To evaluate the deformation gradient $\D y$ of a discrete deformation
$y$, we note that $\L^\per$ has a natural triangulation $\Tm$ (see
Figure \ref{fig:scb_strip}), and identify $y$ with its continuous
piecewise affine interpolant in $\PI(\Tm; \R^2)$.

Let $\Th$ be a {\em coarse} triangulation of $\Omega$ (which can be
repeated periodically) and let $\PI(\Th;\R^2)$ denote the space of
continuous and piecewise affine deformations of $\Om$, such that
$y_h(x + N_1 a_1) = y_h(x) + N_1 a_1$, then the SCB energy of a
deformation $y_h \in \PI(\Th; \R^2)$ is given by
\begin{displaymath}
  \Escb(y_h) = \int_\Om W(\D y_h) \dx + \int_{\Gamma} \gamma(\D y_h,
  \nu) \dx,
\end{displaymath}
where $\Gamma \subset\pp\Omega$ denotes the free boundary, that is the
portion of the boundary with normal $\nu = \pm (0, 1)$, $W$ is the
Cauchy--Born stored energy function and $\gamma$ the SCB surface
energy function, which are defined as follows:
\begin{figure}[t]
  \includegraphics[height=3cm]{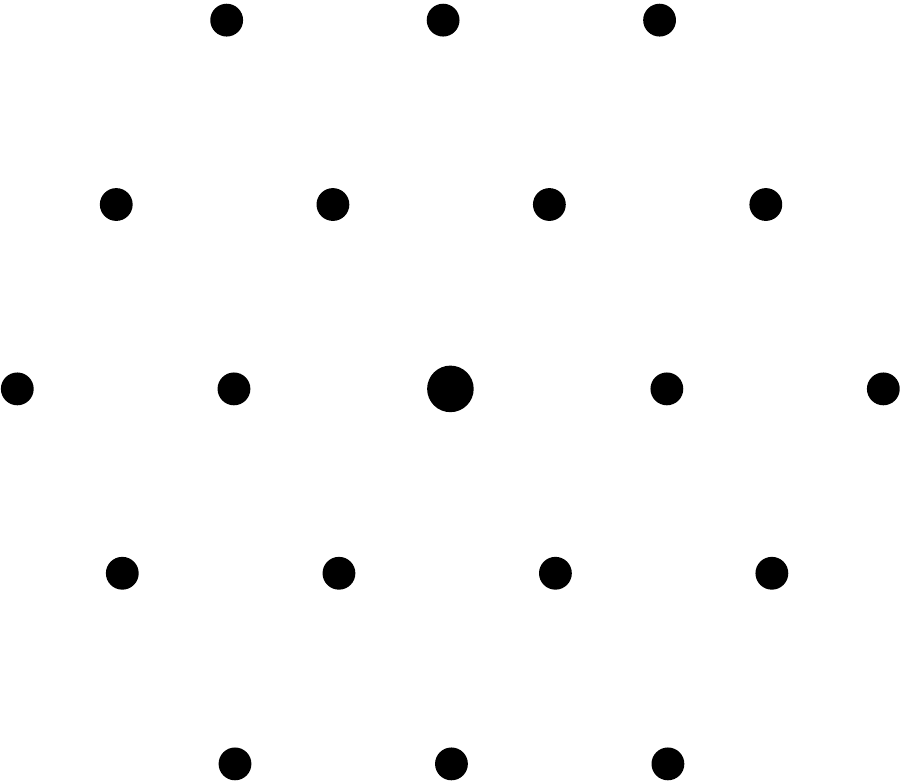} \quad \qquad
  \includegraphics[height=3cm]{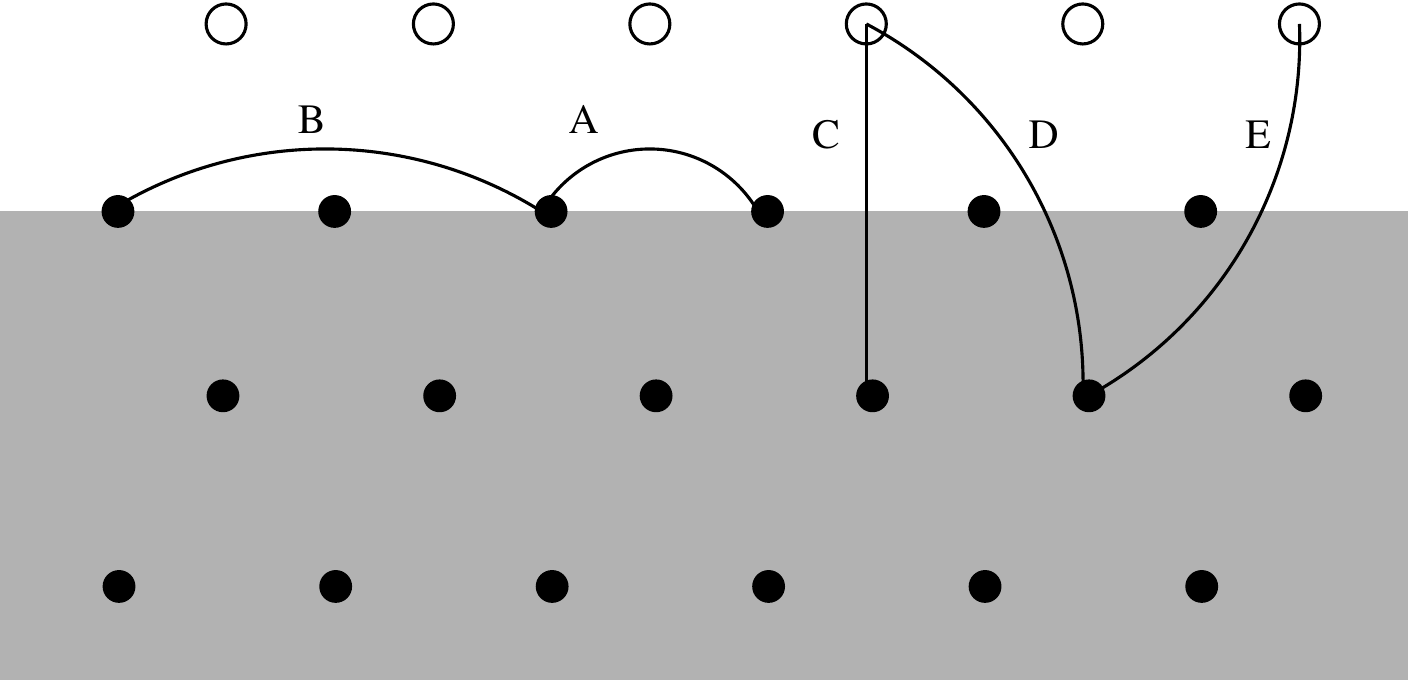} \\
 {\small (a) \hspace{6.5cm} (b) \hspace{1.5cm}}
  \caption{\label{fig:2d_flat} (a) Third interaction
    neighbourhood. (b) Construction of $\gamma$: Bonds A, B are
    underestimated by the Cauchy--Born approximation (counted only
    half), while the bonds C, D, E are overestimated (they do not
    exist in the atomistic model but are counted half in the
    Cauchy--Born model).}
\end{figure}
\begin{itemize}
\item If we denote by $\Ng_\cb$ the interaction neighbourhood of the
  origin in the infinite lattice $\mA \Z^2$ (see Figure
  \ref{fig:2d_flat}(a)), then the Cauchy--Born stored energy function
  is given by
  \begin{displaymath}
    W(\mF) = \frac{1}{\det\mA}\sum_{\eta \in \Ng_\cb} \phi\b(| \mF \eta|\b).
  \end{displaymath}
\item To define $\gamma$, we assume throughout that all surfaces of
  $\Om$ are aligned with one of the three directions $a_1, a_2$, or
  $a_3$, that is, $\nu \perp a_j =: \nu^\perp$. Then the requirement
  that the SCB energy is exact under homogeneous deformations, in
  domains without corners, yields the expression
  \begin{align*}
    \gamma(\mF, \gamma) =~& \smfrac12 \phi\b( |\mF \nu^\perp| \b) +
    \smfrac12 \phi\b(2 |\mF \nu^\perp|\b) \\
    & - \smfrac12 \phi\b( \sqrt{3} |\mF \nu| \b)
    - \smfrac12 \phi\b( 2 |\mF \mQ_{12} \nu| \b) 
    -\smfrac12 \phi\b( 2 |\mF \mQ_{12}^T \nu| \b),
  \end{align*}
  where $\mQ_{12}$ denotes a rotation through arclength $2\pi/12$; see
  Figure \ref{fig:2d_flat}(b) for an illustration. A rigorous proof of
  this formula follows immediately from Shapeev's bond density lemma
  \cite{Shapeev2011}.
\end{itemize}

\subsection{Numerical results}
\label{sec:2d:num}
In the numerical experiments we consider two types of finite element
grids: a uniform grid with spacing $h = h_0 = 5$ (cf. Figure
\ref{fig:scb_strip}(a)), and a grid with an additional layer of elements
at the free boundary, atomic spacing $h_0 = 1$ in the normal direction
and uniform spacing $h = 5$ in the tangential direction (cf. Figure
\ref{fig:scb_strip}(b)). We will again measure the following relative
errors:
\begin{displaymath}
  \Err_2 := \frac{\| \D y^\scb_h - \D y^\a \|_{L^2}}{\| \D y_h^\cb -
    \D y^\a \|_{L^2}}, \qquad \text{and} \qquad
  \ol{\Err} := \bg|\frac{ \int_\Om (\D y_h^\scb - \D
    y^\a)\dx}{\int_\Om (\D y_h^\cb - \D y^\a) \dx} \bg|,
\end{displaymath}
where $y^\a, y_h^\scb,$ and $y_h^\cb$ denote the minimizers of,
respectively, $\Ea, \Escb$, and $\Escb$ with $\gamma = 0$. That is,
$\Err_2$ and $\ol{\Err}$ measure the improvement of SCB over the pure
Cauchy--Born model.

The numerical results are displayed in Figures
\ref{fig:err_strip_err2} and \ref{fig:err_strip_errm}. Although the
numerical results do not as clearly display the predicted convergence
rates, they do seem to approach these rates for increasing values of
$\alpha$. What is again clear is that the average strain has a much
higher accuracy than the strain field, and that the additional mesh
layer also substantially improves the accuracy of the method. We also
note that we now observe essentially the predicted rate $e^{-2\alpha}$
for the mean-strain error of the enhanced SCB model, instead of the
unexpected rate $e^{-3\alpha}$.

\begin{figure}[t]
  \includegraphics[width=10cm]{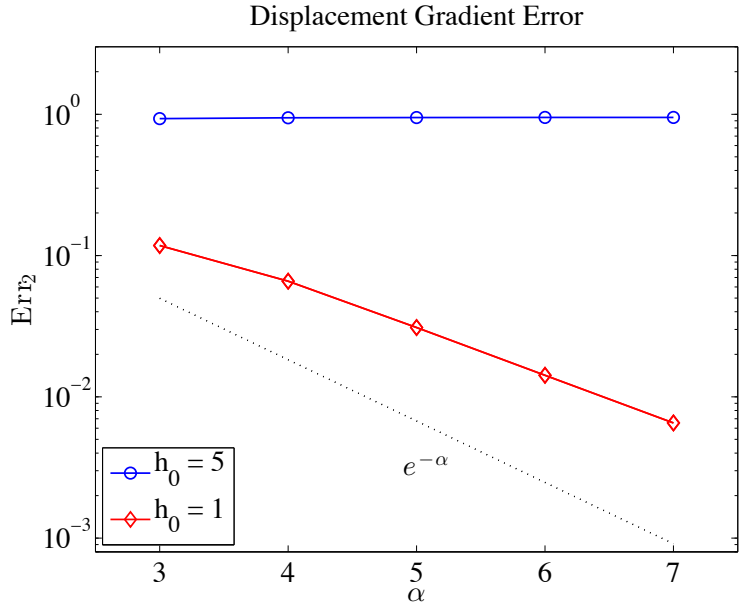}
  \caption{\label{fig:err_strip_err2} Relative error in the
    $W^{1,2}$-seminorm of the 2D SCB model in the flat interface
    example described in Section \ref{sec:2d}, for varying stiffness
    parameter $\alpha$ and two types of finite element grids.}

\end{figure}

\begin{figure}[t]
 \includegraphics[width=10cm]{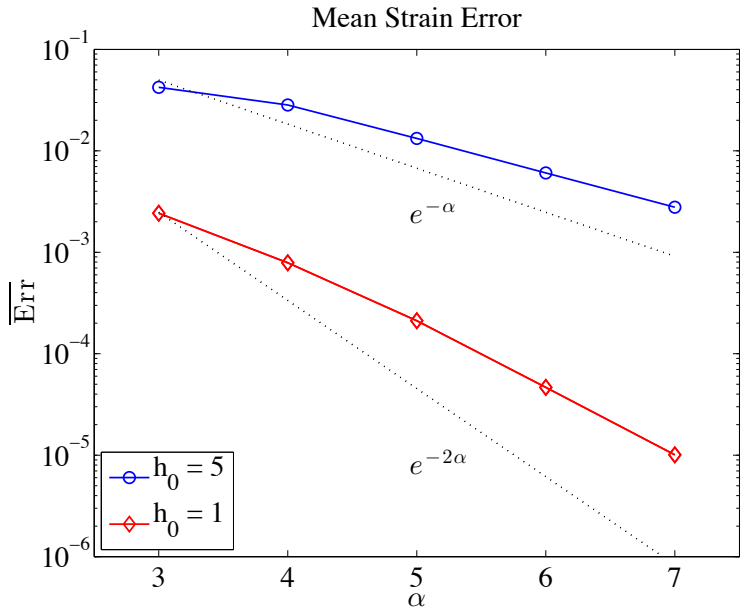}
 \caption{\label{fig:err_strip_errm} Relative error for the mean
   strain of the 2D SCB model applied to the flat interface example
   described in Section \ref{sec:2d}, for varying stiffness parameter
   $\alpha$ and two types of finite element grids.}
\end{figure}

\section*{Conclusion}
We presented an error analysis of the SCB method in the case where the
dominant effect is surface relaxation in the normal direction. Our
main results are: 1. We showed that the ``correct'' approximation
parameter is the stiffness of the interaction potential. 2. We showed
that the mean strain (which is an important quantity of interest) has
a much lower error than the strain field. 3. We showed that adding a
single mesh layer at the free boundary with atomic spacing in the normal
direction yields a substantial improvement to the accuracy of the SCB
method with minimal increase in the computational cost.

We also performed numerical experiments for domains with corners,
which remain inconclusive so far. At corners there is an interplay
between the normal stress and tangential stress of adjacent edges,
which creates additional elastic fields. A finer analysis of this case
is still required. In particular, it would be interesting to
understand whether normal or tangential forces dominate the bahaviour
of the system in that case.

\appendix

\section{Proofs}

\begin{proof}[Proof of Propositions \ref{th:properties_Ea} and \ref{th:properties_Ecb}]
  For each $\x \in \N$ we have
  \begin{align*}
    \phi(1+u_\x) + \phi(2+u_\x+u_{\x+1})
    =~& \phi(1) + \phi'(1) u_\x + \smfrac12 \phi''(\theta_\x^{(1)})
    |u_\x|^2 \\
     & + \phi(2)  + \phi'(2) (u_\x+u_{\x+1}) \smfrac12
     \phi''(\theta_\x^{(2)}) |u_\x+u_{\x+1}|^2.
  \end{align*}
  where $(\theta_\x^{(j)}-j) \in \ell^1$ by Taylor's theorem. Since
  $\phi(1) + \phi(2) = 0$, summing over $\x \in \N$ and noting that
  the first-order terms cancel, yields
  \begin{displaymath}
    \Ea(y) \leq C \| u \|_{\ell^2}^2.
  \end{displaymath}
  Note that this seemingly requires only that $u \in \ell^2$, however,
  the series converges absolutely only if $u \in \ell^1$.

  Repeating the argument for a perturbation from a general state
  $\Ea(u + v)$ shows the Fr\'echet differentiability of $\Ea$. 

  The same argument can be applied to prove Proposition
  \ref{th:properties_Ecb}.
\end{proof}

\begin{proof}[Proof of Proposition \ref{th:asymp}]
  Inserting the definition of $r_0$ from \eqref{eq:defn_r0} into
  $\phi''(1)$ yields
  \begin{align*}
    \phi''(1) =~& 4 \alpha^2 e^{-2\alpha (1 - r_0)} - 2 \alpha^2
    e^{-\alpha(1-r_0)} \\
    =~&  4 \alpha^2 \B( \frac{1 + 2 e^{-\alpha}}{1 + 2 e^{-2\alpha}}
    \B)^2 - 2 \alpha^2 \B( \frac{1 + 2 e^{-\alpha}}{1 + 2
      e^{-2\alpha}} \B).
  \end{align*}
  Expanding
  \begin{displaymath}
    \frac{1 + 2 e^{-\alpha}}{1 + 2 e^{-2\alpha}}  = 1 + 2 e^{-\alpha}
    + \OO(e^{-2\alpha}),
  \end{displaymath}
  we obtain
  \begin{align}
    \notag
    \phi''(1) =~& 4 \alpha^2 (1 + 4 e^{-\alpha}) - 2\alpha^2 (1 + 2
    e^{-\alpha}) + \OO(\alpha^2 e^{-2\alpha}) \\
    \label{eq:expansion_hphi1}
    =~& 2 \alpha^2 + 12 \alpha^2 e^{-\alpha} + \OO(\alpha^2 e^{-2\alpha}).
  \end{align}
  Similar calculations yield the expansions
  \begin{align}
    \label{eq:exp_dphi2}
    \phi'(2) =~& 2 \alpha e^{-\alpha} + 2 \alpha e^{-2\alpha} +
    \OO(\alpha e^{-3\alpha}), \quad \text{and} \\
    \label{eq:exp_hphi2}
    \phi''(2) =~& -2 \alpha^2 e^{-\alpha} + \OO(\alpha^2 e^{-3\alpha}).
  \end{align}
 
  Writing out $U^\scb_0$ in terms of the Morse potential, and using
  the fact that $2 \leq 4 - 2/h_0 \leq 4$, which ensures that
  $\phi''(1) + (4 - 2/h_0) \phi''(2) \geq W''(1) > 0$, we obtain
   \begin{align*}
     h_0 U^\scb_0 =~& \frac{\phi'(2)}{\phi''(1) + (4-2/h_0)\phi''(2)}
     = \frac{\phi'(2)}{\phi''(1)} \frac{1}{1 + (4-2/h_0)
       \smfrac{\phi''(2)}{\phi''(1)}} \\
     =~& \frac{\phi'(2)}{\phi''(1)} \B[ 1 - \b(4 - \smfrac{2}{h_0}\b)
     \smfrac{\phi''(2)}{\phi''(1)} + \OO\b(\b(\smfrac{\phi''(2)}{\phi''(1)}\b)^2\b)\B].
   \end{align*}
   Inserting the expansions \eqref{eq:expansion_hphi1} to
   \eqref{eq:exp_hphi2} gives \eqref{eq:asymp:scb}.

   To prove \eqref{eq:asymp:atm} we first expand $\lambda$ in terms of
   $\beta := \frac{\phi''(2)}{\phi''(1)}$, and then in terms of
   $e^{-\alpha}$,
   \begin{align}
     \notag
     \lambda =~& \smfrac{1}{2\beta} \B( \sqrt{1 + 4 \beta} - 1 - 2
     \beta \B) \\
     \notag
     =~& \smfrac{1}{2\beta} \B( 1 + \smfrac12 (4 \beta) - \smfrac18
     (4\beta)^2 + \smfrac1{16}(4\beta)^3 + \OO(\beta^4) - 1 - 2 \beta
     \B) \\
     \label{eq:asymp_lambda}
     =~& - \beta + 2 \beta^2 + \OO(\beta^3) 
     = e^{-\alpha} - 4 e^{-2\alpha} + \OO(e^{-3\alpha}).
   \end{align}
   Inserting this result into \eqref{eq:atm_soln} and a brief
   computation yield
   \begin{align*}
     u^\a_0 =~& \frac{\phi_2'}{\phi_1'' + \phi_2''(1+\lambda)} 
     = \smfrac{e^{-\alpha}}{\alpha} - 4 \smfrac{e^{-2\alpha}}{\alpha} + \OO\b(\smfrac{e^{-3\alpha}}{\alpha}\b).
   \end{align*}
   Since $u^\a_\x = u^\a_0 \lambda^{-\x}$ the result
   \eqref{eq:asymp:atm} follows easily.
\end{proof}

\bibliographystyle{plain}
\bibliography{scb}

\end{document}